\newtheorem{theorem}{Theorem}
\newcommand{\reals}{\mathbb{R}}
\DeclareMathOperator*{\argmin}{arg\,min}
\newcommand{\E}[2]{\mathbb{E}_{#1}\!\left[#2\right]}
\newcommand{\Egiven}[3]{\mathbb{E}_{#1}\!\left[\left.\kern-\nulldelimiterspace#2\,\right|\,#3\right]}
\newcommand{\prob}[1]{\mathbb{P}\!\left(#1\right)}
\newcommand{\states}{\mathcal{S}}
\newcommand{\actions}{\mathcal{A}}
\newcommand{\st}[1]{\mathbf{s}_{#1}}
\newcommand{\at}[1]{\mathbf{a}_{#1}}
\title{Bounded-Error Policy Optimization for Mixed Discrete-Continuous MDPs via Constraint Generation in Nonlinear Programming}
\author{ Michael Gimelfarb \\
	Department of Computer Science \\
    University of Toronto, Toronto, Canada \\
	\texttt{mike.gimelfarb@mail.utoronto.ca} \\
	\And
	Ayal Taitler \\
	Department of Industrial Engineering and Management \\
	Ben Gurion University of the Negev, Be'er Sheva, Israel \\
	\texttt{ataitler@bgu.ac.il} \\
    \And
	Scott Sanner \\
	Department of Mechanical and Industrial Engineering\\
	University of Toronto, Toronto, Canada \\
	\texttt{ssanner@mie.utoronto.ca}
}
\begin{document}
\maketitle

\begin{abstract}
	We propose the Constraint-Generation Policy Optimization (CGPO) framework to optimize policy parameters within compact and interpretable policy classes for mixed discrete-continuous Markov Decision Processes (DC-MDP). CGPO can not only provide bounded policy error guarantees over an infinite range of initial states for many DC-MDPs with expressive nonlinear dynamics, but it can also provably derive optimal policies in cases where it terminates with zero error. Furthermore, CGPO can generate worst-case state trajectories to diagnose policy deficiencies and provide counterfactual explanations of optimal actions. To achieve such results, CGPO proposes a bilevel mixed-integer nonlinear optimization framework for optimizing policies in defined expressivity classes (e.g. piecewise linear) and reduces it to an optimal constraint generation methodology that adversarially generates worst-case state trajectories. Furthermore, leveraging modern nonlinear optimizers, CGPO can obtain solutions with bounded optimality gap guarantees. We handle stochastic transitions through chance constraints, providing high-probability performance guarantees. We also present a roadmap for understanding the computational complexities of different expressivity classes of policy, reward, and transition dynamics. We experimentally demonstrate the applicability of CGPO across various domains, including inventory control, management of a water reservoir system, and physics control. In summary, CGPO provides structured, compact and explainable policies with bounded performance guarantees, enabling worst-case scenario generation and counterfactual policy diagnostics.
\end{abstract}

\keywords{Planning \and Control \and Mixed Discrete-Continuous MDP \and Constraint Generation \and Chance Constraints \and Piecewise-Linear Policy \and Sequential Decision Optimization \and Policy Optimization}

\section{Introduction}

An important aim of sequential decision optimization of challenging \emph{Discrete-Continuous Markov Decision Processes} (DC-MDP) in the artificial intelligence, operations research, and control domains is to derive policies that achieve optimal control.
A desirable property of such policies is \emph{compactness} of representation, which provides efficient execution on resource-constrained systems such as mobile devices \citep{wang2022}, as well as the potential for \emph{introspection and explanation} \citep{topin2021}. Moreover, while the derived policy is expected to perform well in expectation, in many applications it is desirable to obtain \emph{bounds on maximum policy error} and the scenarios that induce worst-case policy performance~\citep{corso2021}.

Popular policy optimization approaches used in model-free reinforcement learning~\citep{ppo2017} do not provide bounded error guarantees on policy performance, even if they use compact policy classes such as decision trees~\citep{topin2021}.  Model-based extensions that leverage gradient-based policy optimization~\citep{bueno2019} similarly do not provide bounded error guarantees due to the nonconvexity of the problem.  

In contrast, there exists a rich tradition of work leveraging \emph{mixed integer programming} (MIP) for bounded-error policy optimization~\citep{mmr17mdp,dolgov05cmdp,vos2023}, but {these methods only apply to discrete state and action MDPs}.  More importantly, these previous policy optimization formulations cannot be directly extended to continuous state or action MDPs due to the infinite number of constraints arising from the infinite state and action space in these formulations.  While some historical work has attempted to circumvent large or infinite constraint sets (from the continuous setting) via constraint generation~\citep{farias2006random,hauskrecht06hybridALP,schuurmans01cgenALP}, none of these works optimize for policies.
Finally, while there exist bespoke bounded error policy solutions for niche MDP classes with continuous states or actions such as linear–quadratic controllers \citep{aastrom2012}, ambulance dispatching \citep{albert2022}, and the celebrated ``$(s, S)$'' threshold policies for \citep{scarf1960}'s inventory management, these solution methods do not readily generalize beyond the specialized domains for which they are designed.


{\bf It remains an open question as to how to derive bounded error policies for the infinite state and action, discrete and continuous MDP (DC-MDP) setting.  We address this open question through a novel bilevel MIP formulation.}  Unfortunately, unlike MIP solutions for discrete MDPs, DC-MDPs with continuous states and/or actions cannot be solved directly since the resulting MIP  (a)~requires a challenging {bilevel MIP} formulation, (b)~has an {uncountably infinite} number of constraints, and (c)~will often be {nonlinear}.  
Fortuitously, we show how a constraint generation form of this policy optimization -- termed CGPO -- is able to address (a), (b) and (c) to optimize policies with bounded error guarantees in {many cases}. 

\begin{figure*}
    \centering
   \begin{minipage}{0.7\textwidth}
      \includegraphics[width=0.98\linewidth]{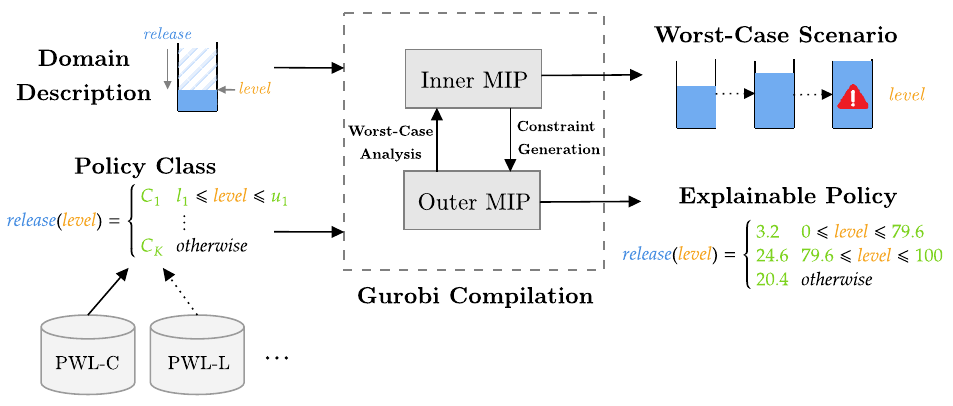}
    \end{minipage}\hfill%
    \begin{minipage}{0.3\textwidth}
      \includegraphics[width=0.95\linewidth]{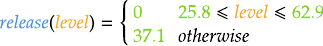}\\ \\
      \includegraphics[width=0.95\linewidth]{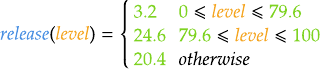}\\ \\
      \includegraphics[width=1.0\linewidth]{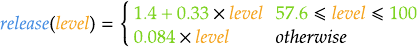}
    \end{minipage}
    \caption{Reservoir control is used as an illustrative example. \textbf{Left:} an overview of CGPO, which consists of a domain description and policy representation compiled to a bilevel mixed-integer program (MIP), in which the inner problem computes the worst-case trajectories for the current policy while the outer problem updates the policy via constraint-generation. The result is a worst-case scenario for the policy (facilitating policy failure analysis), a concrete policy within the expressivity class (for direct policy inspection), and a gap on its performance (error bound). \textbf{Right:} three optimal (i.e. zero-gap) policies produced upon termination across several piecewise policy classes. Crucially, our framework provides the ability to derive highly {compact} (e.g. memory and time-efficient to execute), {intuitive} and {nonlinear} policies, with {strong bound guarantees} on policy performance.}  
    \label{fig:enter-label}
\end{figure*}

In summary, our aim is to provide a solution approach to optimize and provide strong performance bound guarantees on structured and compact DC-MDP policies under various expressivity classes of policy, reward, and nonlinear transition dynamics. Our specific contributions are:
\begin{enumerate}
    \item \emph{We propose a novel bilevel optimization framework for solving nonlinear DC-MDPs called Constraint-Generation Policy Optimization (CGPO), that admits a clever reduction to an iterative constraint-generation algorithm that can be readily implemented using standard MIP solvers \mbox{(Fig.~\ref{fig:enter-label})}.}
    State-of-the-art MIP solvers often leverage spatial branch-and-bound techniques, which can provide not only optimal solutions for large mixed integer linear programs (MILP), but also 
    bounded optimality guarantees for mixed integer {nonlinear} programs (MINLP) \citep{castro2015tightening}.  Chance constraints~\citep{farina2016} are used for probabilistic guarantees. 
    \item \emph{If the constraint generation algorithm terminates, we guarantee that we have found the optimal policy within the given policy expressivity class (Theorem \ref{thm:main}).} Further, even when constraint generation does not terminate, our algorithm provides a tight optimality bound on the performance of the computed policy at each iteration and the scenario where the policy performs worst (and as a corollary, for any externally provided policy).
    \item \emph{We provide a road map to characterize the optimization problems in (1) above -- and their associated expressivity classes -- for different expressivity classes of policies and state dynamics, ranging from MILP, to polynomial programming, to  nonlinear programs (Table~\ref{table:complexity}).} This information is beneficial for reasoning about which optimization techniques are most effective for different combinations of DC-MDP and policy class expressivity.
    \item Finally, \emph{we provide a variety of experiments demonstrating the range of rich applications of CGPO under linear and nonlinear dynamics and various policy classes.} Critically, we derive bounded optimal solutions for a range of problems, including linear inventory management, reservoir control, and a highly nonlinear VTOL control problem. Furthermore, since our policy classes are compact by design, we can also directly inspect and analyze these policies (Fig.~\ref{fig:policies}). To facilitate policy interpretation and diagnostics, we can compute the state and exogenous noise trajectory scenario that attains the worst-case error bounds for a policy (Fig.~\ref{fig:worst})
    as well as a {counterfactual explanation} of what action {should} have been taken in comparison to what the policy prescribes.
\end{enumerate}

\section{Preliminaries}
\label{sec:prelim}

\subsection{Function Classes}

We first define some important classes of functions commonly referred to throughout the paper. Let $\mathcal{X}$ and $\mathcal{Y}$ be arbitrary sets. A function $f : \mathcal{X} \to \mathcal{Y}$ is called \emph{piecewise} (PW) if there exist functions $f_1, \dots f_K, f_{K+1} : \mathcal{X} \to \mathcal{Y}$ and disjoint Boolean case conditions $\mathcal{P}_1, \dots \mathcal{P}_K : \mathcal{X} \to \lbrace 0, 1 \rbrace$, such that for any $x \in \mathcal{X}$, $\forall i, j\!\neq\!i$ we have $\mathcal{P}_i(x)\!=\!1 \, \Rightarrow \, \mathcal{P}_j(x)\!=\!0$, and for case functions $f(x)=f_i(x)$ if $\mathcal{P}_i(x)=1$ and $f(x)=f_{K+1}(x)$ if $\mathcal{P}_i(x) = 0, \, \forall i$.

Both case conditions $\mathcal{P}_i$ and case functions ${f}_i$ can be linear or nonlinear. In this paper, we consider the following classes:
\begin{description}
    \item[C] \emph{constant}, i.e. $f_i(\mathbf{x}) = C$
    \item[D] \emph{discrete}, same as restricted values of C
    \item[S] \emph{simple (axis-aligned) linear functions}, i.e. $f_i(\mathbf{x}) = b_i + w_i \times x_j$, where $j \in \lbrace 1, 2 \dots n \rbrace$ and $b_i, w_i \in \mathbb{R}$
    \item[L] \emph{linear}, i.e. $f_i(\mathbf{x}) = b_i + \mathbf{w}_i^T \mathbf{x}$
    \item[B] \emph{bilinear}, i.e. $f_i(\mathbf{x}) = x_1 \times x_2 + x_2 \times x_3$
    \item[Q] \emph{quadratic}, i.e. $f_i(\mathbf{x}) = b_i + \mathbf{w}_i^T \mathbf{x} + \mathbf{x}^T M_i \mathbf{x}$, where $M_i \in \mathbb{R}^{n\times n}$
    \item[P] \emph{polynomial} of order $m$, i.e. $f_i(\mathbf{x}) = \sum_{j_1=1}^m \dots \sum_{j_n = 1}^m w_{i,j_1,\dots j_n} \prod_{k=1}^n x_k^{j_k}$
    \item[N] \emph{general nonlinear}, i.e. $f(\mathbf{x}) = e^{b_i + \mathbf{w}_i^T \mathbf{x}}$.
\end{description}
Analogously, we consider analogous functions over integer (I) domains $\mathcal{X}$ or $\mathcal{Y}$, or mixed discrete-continuous (M) domains. $\mathcal{P}_i$ can be characterized similarly based on whether the constraint set defining it is constant, linear, bilinear, etc. 

We also introduce a convenient notation to describe general PW functions by concatenating the expressivity classes of their constraints $\mathcal{P}_i$ and case functions $f_i$. For example, PWS-C describes a piecewise function on $\mathbb{R}^n$ with simple (axis-aligned) linear constraints and constant values, while PWS-L describes a function with similar constraints but linear values.
We can also append the number of cases $K$ to the notation, i.e. PWS1-L describes a piecewise linear function with $K = 1$ case conditions. 
More generally, for PWL, PWP or PWN, constraints could also be logical conjunctions of other simpler constraints.

\subsection{Mathematical Programming}

Given a function $f : \mathcal{X} \to \mathcal{Y}$ and Boolean case condition $\mathcal{P} : \mathcal{X} \to \lbrace 0, 1 \rbrace$, the \emph{mathematical program} is defined as:
\begin{equation*}
\begin{aligned}
    \min_{x \in \mathcal{X}}\quad f(x) \qquad 
    \textrm{s.t.} \quad \mathcal{P}(x) = 1.
\end{aligned}
\end{equation*}

Important expressivity classes of \emph{mixed-integer program} (MIP) optimization problems, i.e. those with discrete and continuous decision variables, include:
\begin{description}
    \item[MILP] \emph{mixed-integer linear}: $f$ and $\mathcal{P}$ both in L
    \item[MIBCP] \emph{mixed-integer bilinearly constrained}: $f$ is in L and $\mathcal{P}$ is in B
    \item[MIQP] \emph{mixed-integer quadratic}: $f$ is in Q and $\mathcal{P}$ is in L
    \item[QCQP] \emph{quadratically constrained quadratic}: $f$ and $\mathcal{P}$ both in Q
    \item[PP] \emph{polynomial}: $f$ and $\mathcal{P}$ both in P
    \item[MINLP] \emph{mixed-integer nonlinear}: $f$ and $\mathcal{P}$ both in N.
\end{description}


Branch-and-bound \citep{morrison2016branch} solvers
are commonly used to maintain upper and lower bounds on the minimal objective value of any linear or nonlinear MIP,
whose difference is the so-called \emph{optimality gap}; when this gap is zero, an optimal solution has been found. Moreover, some packages such as Gurobi, which we use to perform the necessary compilations in CGPO in our experiments, also support a variety of nonlinear mathematical operations via piecewise-linear approximation \citep{castro2015tightening}.

\subsection{Discrete-Continuous Markov Decision Processes}

A \emph{Discrete-Continuous Markov decision process} (DC-MDP) is a tuple $\langle \states, \actions, P, r \rangle$, where $\states$ is a set of states, $\actions$ is a set of actions or controls.  $\states$ and $\actions$ may be discrete, continuous, or mixed.  $P(\st{}, \at{}, \st{}')$ is the probability of transitioning to state $\st{}'$ immediately upon choosing action $\at{}$ in state $\st{}$, and $r(\st{}, \at{}, \st{}')$ is the corresponding reward received. We assume that $r$ is uniformly bounded, i.e. there exists $B < \infty$ such that $|r(\st{}, \at{}, \st{}')| \leq B$ holds for all $\st{}, \at{}, \st{}'$.

Given a planning horizon of length $T$ (assumed fixed in our setting), the \emph{value} of a (open-loop) \emph{plan} $\alpha = [\at{1}, \dots \at{T}] \in \mathcal{A}^T$ starting in state $\st{1}$ is 
\begin{equation*}
    V(\alpha, \st{1}) = \E{\st{t+1} \sim P(\st{t}, \at{t}, \cdot)}{\sum_{t=1}^T r(\st{t}, \at{t}, \st{t+1})}.
\end{equation*}
A (closed-loop) \emph{policy} $\pi = [\pi_1, \dots \pi_T]$ is a sequence of mappings $\pi_t : \states \to \actions$, whose value is defined as
\begin{equation*}
    V(\pi, \st{1}) = \E{\st{t+1} \sim P(\st{t}, \pi_t(\st{t}), \cdot)}{\sum_{t=1}^T r(\st{t}, \pi_t(\st{t}), \st{t+1})}.
\end{equation*}
{Dynamic programming} approaches such as value and policy iteration can compute an optimal horizon-dependent policy $\pi_{H}^*$ \citep{puterman2014}, but do not directly apply to DC-MDPs with infinite or continuous state or action spaces. 

Our goal is to compute an optimal stationary policy $\pi^*$ that minimizes the error in value relative to $\pi_H^*$ over all initial states of interest $\states_1 \subseteq \states$, and all stationary policies $\Pi$, i.e.
\begin{equation}
\label{eqn:optimality}
    \pi^* \in \argmin_{\pi \in \Pi} \max_{\st{1} \in \states_1} \left[ V(\pi_H^*, \st{1}) - V(\pi, \st{1}) \right].
\end{equation}

In practical applications, the policy class is often restricted to function approximations $\tilde{\Pi} \subset \Pi$. A variety of planning approaches can compute 
plans $\tilde{\pi}^* \in \tilde{\Pi}$ for this problem, including straight-line planning that scales well in practice but does not learn policies \citep{raghavan2017,wu2017}.  Reactive policy optimization~\citep{bueno2019,low2022} and variants of MCTS~\citep{swiechowski2023} that learn neural network policies cannot provide concrete error bounds on policy performance nor do they facilitate worst-case analysis, or ease of interpretation of learned compact policy classes as we provide.

\section{Methodology}
\label{sec:main}

We begin with a derivation of CGPO for general DC-MDPs in the deterministic setting. We then derive CGPO in the more general stochastic setting using chance constrained optimization.  We end with a discussion of problem class complexity and convergence. A worked example illustrating the overall execution of CGPO is provided in Appendix~\ref{sec:app-worked-example}.


\subsection{Constraint Generation for Deterministic DC-MDPs}

We assume $\pi$ can be compactly identified by a vector $\mathbf{w} \in \mathcal{W}$ of decision variables, and we use the shorthand $V(\mathbf{w}, \st{1})$ to denote the value $V(\pi_{\mathbf{w}}, \st{1})$ of policy $\pi_{\mathbf{w}}$ parameterized by $\mathbf{w}$. We focus our attention on approximate policy sets $\tilde{\Pi}_{\mathcal{W}} = \lbrace \pi_{\mathbf{w}} : \mathbf{w} \in \mathcal{W} \rbrace$ enumerated by a compact set of parameters $\mathcal{W}$.

First, observe that for every possible initial state $\st{1}$, there exists a fixed optimal plan $\alpha^* = [\at{1}^*, \dots \at{T}^*]$ such that $V(\alpha^*, \st{1}) = V(\pi_H^*,\st{1})$. On the other hand, since we only have access to an expressivity class of {approximate} policies $\tilde{\Pi}_{\mathcal{W}}$, the error $\varepsilon(\mathbf{w}, \st{1})$ of $\pi_{\mathbf{w}}$ in state $\st{1}$ relative to $V(\pi_H^*,\st{1})$ according to (\ref{eqn:optimality}) must be
\begin{equation*}
    \varepsilon(\mathbf{w}, \st{1}) \geq \max_{\alpha \in \actions^T} V(\alpha, \st{1}) - V(\mathbf{w}, \st{1}),
\end{equation*}
and thus the (global) {worst-case} error $\varepsilon(\mathbf{w})$ of $\mathbf{w}$ is
\begin{equation}
\label{eqn:pi_lower_bound}
    \varepsilon(\mathbf{w}) \geq \max_{\st{1} \in \states_1} \max_{\alpha \in \actions^T} \left[ V(\alpha, \st{1}) - V(\mathbf{w}, \st{1}) \right].
\end{equation}
However, since we seek the approximate {optimal} policy $\tilde{\pi}^* \in \tilde{\Pi}_{\mathcal{W}}$, we can directly minimize (\ref{eqn:pi_lower_bound}) over $\mathcal{W}$, obtaining the \emph{infinitely-constrained mixed-integer program}:
\begin{equation} 
\label{eqn:bilevel}
    \begin{aligned}
        \min_{\mathbf{w} \in \mathcal{W}, \, \varepsilon \in [0, \infty)} \quad & \varepsilon \\
        \textrm{s.t.} \quad & \varepsilon \geq \max_{\st{1}\in\states_1}  \max_{\alpha \in \actions^T} \big[ V(\alpha, \st{1})- V(\mathbf{w}, \st{1}) \big].
    \end{aligned}
\end{equation}
However, the constraint is highly nonlinear, turning \eqref{eqn:bilevel} into a {bilevel program} and making analysis of this particular formulation difficult. Instead, since the $\max$'s must hold for all states and actions, we can rewrite the problem (\ref{eqn:bilevel}) as:
\begin{equation}
\label{eqn:bilevel2}
\begin{aligned}
    \min_{\mathbf{w} \in \mathcal{W}, \, \varepsilon \in [0, \infty)} \quad &\varepsilon  \\
    \textrm{s.t.} \quad &\varepsilon \geq V(\alpha, \st{1})- V(\pi, \st{1}), \quad \, \forall \alpha \in \actions^T, \, \st{1} \in \states_1.
\end{aligned}
\end{equation}
The goal is therefore to solve (\ref{eqn:bilevel2}), which is still infinitely-constrained when $\mathcal{S}$ or $\mathcal{A}$ are infinite spaces. Instead, we solve it by splitting the $\min$ over $\mathcal{W}$ and the $\max$ over $(\alpha, \st{1})$ into two problems, and apply \emph{constraint generation} \citep{blankenship1976infinitely,chembu2023}. 

Specifically, starting with a fixed arbitrary scenario $(\st{1}, \alpha)$, we form the constraint set $\mathcal{C} = \lbrace (\st{1}, \alpha) \rbrace$, and then solve the following two problems:
\begin{description}
    \item[\textbf{Outer}] Solve (\ref{eqn:bilevel2}) within the set of finite constraints $\mathcal{C}$ to obtain $\mathbf{w}^* \in \mathcal{W}$.
    \item[\textbf{Inner}] Solve (\ref{eqn:pi_lower_bound})
    ${\arg\!\max}_{\st{1} \in \states_1, \alpha \in \actions^T} \!\left[ V(\alpha, \st{1})\!-\!V(\mathbf{w}^*, \st{1}) \right]$, for the {highest-error scenario} for $\mathbf{w}^*$, and append it to $\mathcal{C}$.
\end{description}
These two steps are repeated until the constraint added to the outer problem is no longer binding, i.e. $V(\alpha^*, \st{1}^*) - V(\mathbf{w}^*, \st{1}^*) \leq 0$, since the solution of the outer problem will not change with the addition of the new constraint. We name our approach \emph{Constraint-Generation Policy Optimization} (CGPO). \emph{In retrospect, we can view CGPO as a policy iteration algorithm where the inner problem adversarially critiques the policy with a worst-case trajectory and the outer problem improves the policy w.r.t.\ all critiques}.

\paragraph{Remarks} Upon termination, CGPO guarantees an optimal (lowest error) policy within the specified policy class (Theorem \ref{thm:main}). 
While we cannot provide a general finite-time guarantee of termination with an optimal policy, CGPO is an {anytime} algorithm that provides a best policy and a bound on performance at each iteration. 
At each iteration, the solution to the inner problem allows analysis of the worst-case trajectory 
with respect to $\pi$ (Fig.~\ref{fig:worst}), and generates a counterfactual explanation of what actions {should} have been made.

\subsection{Chance Constraints for Stochastic DC-MDPs} 



\emph{Chance-constrained optimization} \citep{ariu2017,farina2016,ono2015} allows us to derive high probability intervals on stochastic MDP transitions and reduce our solution to a robust optimization problem with probabilistic performance guarantees.
To achieve this, we will require that the state transition function $P(\st{}, \at{}, \st{}')$ has a natural reparameterization as a deterministic function $g$ of $(\st{}$, $\at{})$ and some exogenous independent and identically distributed noise variable $\xi$ with density $q(\cdot)$ on support $\Xi$, e.g. $\st{}' = g(\st{}, \at{}, \xi)$ \citep{bueno2019,patton2022}. Given a threshold $p \in (0, 1)$ close to 1, we also assume the existence of a computable interval $\Xi_p = [\xi_l, \xi_u]$ such that $\prob{\xi_l \leq \xi \leq \xi_u} \geq p$.

Thus, we can repeat the derivation of (\ref{eqn:bilevel}) by considering the worst case not only over $\st{1}$, but also over possible noise variables $\xi_{1:T} = [\xi_1, \dots \xi_T] \in \Xi_p^T$. The final problem is:
\begin{equation}
\label{eqn:bilevel-cc}
\begin{aligned}
    \min_{\mathbf{w} \in \mathcal{W}, \, \varepsilon \in [0, \infty)} \quad & \varepsilon  \\
    \textrm{s.t.}\quad &\varepsilon \!\geq\! \max_{\st{1}\in\states_1} \max_{\xi_{1:T} \in \Xi_p^T} \max_{\alpha \in \actions^T} \left[ V(\alpha, \st{1}, \xi_{1:T})\!-\! V(\mathbf{w}, \st{1}, \xi_{1:T}) \right],
\end{aligned}  
\end{equation}
where $V(\cdot, \st{1}, \xi_{1:T})$ corresponds to the total reward of the policy or plan accumulated over trajectory $\xi_{1:T}$ starting in $\st{1}$. In the stochastic setting, 
$\varepsilon$ only holds with probability $p^T$ for a horizon $T$ problem. Experimentally, we found that choosing $p$ such that $p^T = P$, where $P$ is a desired probability bound on the full planning trajectory, was sufficient\footnote{Hence, we arbitrarily chose $p = 0.995$ for our empirical evaluations.}.

Once again, (\ref{eqn:bilevel-cc}) can be reformulated as:
\begin{equation}
\label{eqn:bilevel2-cc}
    \begin{aligned}
    \min_{\mathbf{w} \in \mathcal{W}, \, \varepsilon \in [0, \infty)} \quad &\varepsilon \\
    \textrm{s.t.} \quad &\varepsilon \geq V(\alpha, \st{1}, \xi_{1:T})- V(\mathbf{w}, \st{1}, \xi_{1:T}), \quad \, \forall \alpha \in \actions^T, \, \st{1} \in \states_1, \, \xi_{1:T} \in \Xi_p^T.
    \end{aligned}
\end{equation}
Therefore, we can again apply constraint generation to solve this problem in two stages, in which the inner optimization produces not only a worst-case initial state and action sequence, but also the disturbances $\xi_{1:T}^*$ that reproduce {all} future worst-case state realizations $\st{2}, \dots \st{T}$. The overall workflow of the computations is summarized as Algorithm~\ref{alg:stochastic}.

\begin{algorithm}[!h]
    \caption{Constraint-Generation Policy Optimization (CGPO)}
    \label{alg:stochastic}
    \begin{algorithmic}
        \STATE Initialize $p \in (0, 1)$, $\Xi_p$, $\st{1} \in\mathcal{S}_1$, $\xi_{1:T} \in \Xi_p^T$, $\alpha \in \mathcal{A}^T$
        \STATE Set $t = 0$ and $\mathcal{C}_0 = \lbrace (\st{1}, \xi_{1:T}, \alpha) \rbrace$
                \STATE \textbf{Step 1:} Solve the \textbf{outer problem}:
                \begin{equation*}     
                \begin{aligned}
                    (\mathbf{w}_t^*, \varepsilon_t^*) \in \,{\arg\!\min}_{\mathbf{w}\in\mathcal{W},\,\varepsilon \in [0, \infty)} \, &\varepsilon \\
                    \mathrm{s.t.} \quad &\varepsilon \geq V(\alpha, \st{1}, \xi_{1:T}) - V(\mathbf{w}, \st{1}), \quad \, \forall  \,( \st{1}, \xi_{1:T}, \alpha) \in \mathcal{C}_t
                \end{aligned}
                \end{equation*}   
                
                \STATE \textbf{Step 2:} Solve the \textbf{inner problem}:
                \begin{equation*}
                    \begin{aligned}
                    &(\st{1}^*, \xi_{1:T}^*, \alpha^*) \in {\arg\!\max}_{\st{1} \in \states_1, \, \xi_{1:T} \in \Xi_p^T,\, \alpha \in \actions^T} \left[                    V(\alpha, \st{1}, \xi_{1:T})- V(\mathbf{w}_{t}^*, \st{1}, \xi_{1:T}) \right]
                    \end{aligned}    
                \end{equation*}                
                \textbf{Step 3:} Check convergence:
                \IF{$V(\mathbf{w}_{t}^*, \st{1}^*, \xi_{1:T}^*) \geq V(\alpha^*, \st{1}^*, \xi_{1:T}^*)$ (or $\|\mathbf{w}_t^* - \mathbf{w}_{t-1}^*\| < \delta$)}
                    \STATE Terminate with policy $\pi_{\mathbf{w}_{t}^*}$ and error $\varepsilon_t^*$ 
                \ELSE
                    \STATE Set $t = t + 1$, $\mathcal{C}_{t+1} = \mathcal{C}_{t} \cup \lbrace (\st{1}^*, \xi_{1:T}^*, \alpha^*) \rbrace$ and \textbf{go to} \textbf{Step 1}
                \ENDIF
    \end{algorithmic}
\end{algorithm}

\paragraph{Remark} Algorithm \ref{alg:stochastic} may not terminate in general, unless convexity assumptions are placed on $V(\alpha, \st{})$ or $V(\mathbf{w}, \st{})$. Thus, in typical applications, one would terminate when $\|\mathbf{w}_t^* - \mathbf{w}_{t-1}^*\| < \delta$ for chosen hyper-parameter $\delta$. However, our experiments have shown empirically that Algorithm \ref{alg:stochastic} terminates with an exact optimal solution for most policy classes considered, at which point both the return and error curves ``plateau'' (cf. Fig. \ref{fig:simulation}).


\subsection{Convergence}

Under mild conditions, if CGPO terminates at some iteration $t$, then $\mathbf{w}_t^*$ is optimal in $\mathcal{W}$ (with probability at least $p^T$ in the stochastic case).

\begin{theorem}
\label{thm:main}
    If $\mathcal{S}_1$, $\mathcal{A}$, $\Xi_p$ and $\mathcal{W}$ are non-empty compact subsets of Euclidean space, $V(\alpha, \st{}, \xi_{1:T})$ and $V(\mathbf{w}, \st{}, \xi_{1:T})$ are continuous, and CGPO terminates at iteration $t$, then $\mathbf{w}_t^*$ is optimal for problem (\ref{eqn:bilevel2-cc}).
\end{theorem}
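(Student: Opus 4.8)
The plan is to run the classical cutting-plane sandwich argument: the outer program solved at iteration $t$ is a relaxation of (\ref{eqn:bilevel2-cc}), so its optimal value $\varepsilon_t^*$ lower-bounds the true optimum, and the termination test lets me exhibit $(\mathbf{w}_t^*,\varepsilon_t^*)$ as a feasible point of (\ref{eqn:bilevel2-cc}), yielding a matching upper bound. Throughout, the compactness of $\states_1,\actions,\Xi_p,\mathcal{W}$ together with continuity of the two value functions is needed solely to invoke the Weierstrass extreme value theorem, which guarantees that each subproblem actually attains its optimum: the outer program minimizes over the compact set $\mathcal{W}$ a function of $\mathbf{w}$ that is the pointwise maximum of finitely many continuous functions (hence continuous) together with the constant $0$, and the inner program maximizes a continuous function over the compact set $\states_1\times\Xi_p^T\times\actions^T$. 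Consequently the iterate $\mathbf{w}_t^*$, its value $\varepsilon_t^*$, and the inner maximizer $(\st{1}^*,\xi_{1:T}^*,\alpha^*)$ all exist, and the value reported by the inner problem equals $\sup_{\st{1},\xi_{1:T},\alpha}\big[V(\alpha,\st{1},\xi_{1:T})-V(\mathbf{w}_t^*,\st{1},\xi_{1:T})\big]$.

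First I would record the lower bound. Write $\mathrm{OPT}$ for the optimal value of (\ref{eqn:bilevel2-cc}). Step 1's outer program is precisely (\ref{eqn:bilevel2-cc}) with its family of constraints indexed by all of $\states_1\times\Xi_p^T\times\actions^T$ replaced by the subfamily indexed by the finite set $\mathcal{C}_t$. Since $\mathcal{C}_t\subseteq\states_1\times\Xi_p^T\times\actions^T$, every point feasible for (\ref{eqn:bilevel2-cc}) is also feasible for the outer program, hence $\varepsilon_t^*\le\mathrm{OPT}$.

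Next I would use the termination condition to get the upper bound. By the attainment noted above, $V(\alpha^*,\st{1}^*,\xi_{1:T}^*)-V(\mathbf{w}_t^*,\st{1}^*,\xi_{1:T}^*)$ equals the supremum of $V(\alpha,\st{1},\xi_{1:T})-V(\mathbf{w}_t^*,\st{1},\xi_{1:T})$ over all $(\st{1},\xi_{1:T},\alpha)$. The termination test at iteration $t$ asserts this value is $\le 0$; since moreover $\varepsilon_t^*\ge 0$, it follows that $\varepsilon_t^*\ge V(\alpha,\st{1},\xi_{1:T})-V(\mathbf{w}_t^*,\st{1},\xi_{1:T})$ for \emph{every} $(\st{1},\xi_{1:T},\alpha)\in\states_1\times\Xi_p^T\times\actions^T$, i.e. $(\mathbf{w}_t^*,\varepsilon_t^*)$ is feasible for (\ref{eqn:bilevel2-cc}). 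Therefore $\mathrm{OPT}\le\varepsilon_t^*$, and combining with the previous paragraph gives $\mathrm{OPT}=\varepsilon_t^*$ with $(\mathbf{w}_t^*,\varepsilon_t^*)$ attaining it --- which is the assertion of the theorem; in the stochastic reading this optimality is understood modulo the event $\{\xi_{1:T}\in\Xi_p^T\}$, which carries probability at least $p^T$.

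I expect the only genuinely delicate step to be the passage, inside the upper-bound argument, from ``the regret difference is $\le 0$ at the computed maximizer'' to ``it is $\le 0$ everywhere on $\states_1\times\Xi_p^T\times\actions^T$''; this is exactly where compactness and continuity are indispensable, because without attainment of the inner maximum the algorithm could terminate at a point that is not a true supremizer, and then $\mathbf{w}_t^*$ would fail to be feasible for (\ref{eqn:bilevel2-cc}), let alone optimal. The remaining points are routine: the outer program is feasible at every iteration (take $\varepsilon$ equal to the finite maximum over the active right-hand sides, finite since each $V$ is continuous on a compact set and $r\le B<\infty$), ``the newly added cut is no longer binding'' is equivalent to the displayed sign test, and as a consistency check one sees that in fact $\mathrm{OPT}=\varepsilon_t^*=0$ here, since $\varepsilon\ge 0$ is imposed while the policy-induced plan witnesses that $\mathbf{w}_t^*$ attains worst-case regret $\le 0$.
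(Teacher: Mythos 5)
Your proof is correct. It differs from the paper's in presentation rather than in substance: the paper does not argue the result directly, but instead performs a change of variables ($x_0=\varepsilon$ on $X^0=[0,2BT]$, $x=\mathbf{w}$ on $X=\mathcal{W}$, $y=(\alpha,\st{1},\xi_{1:T})$ on $Y=\actions^T\times\states_1\times\Xi_p^T$, $f(x,y)=V(\alpha,\st{1},\xi_{1:T})-V(\mathbf{w},\st{1},\xi_{1:T})$) to exhibit problem (\ref{eqn:bilevel2-cc}) as a semi-infinite program of the form treated by \citet{blankenship1976infinitely}, identifies Algorithm \ref{alg:stochastic} with their Algorithm 2.1, and invokes their Theorem 2.1. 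You instead unfold the argument that theorem encapsulates: the finitely-constrained outer program is a relaxation, giving $\varepsilon_t^*\le\mathrm{OPT}$, and the termination test together with attainment of the inner maximum (Weierstrass, which is exactly where the compactness and continuity hypotheses enter) certifies $(\mathbf{w}_t^*,\varepsilon_t^*)$ as feasible for the full problem, giving $\mathrm{OPT}\le\varepsilon_t^*$. Your version is self-contained and makes the role of each hypothesis explicit --- in particular the genuinely delicate point you flag, that without attainment the algorithm could terminate at a non-supremizing scenario and feasibility would fail; the paper's version is shorter and, by situating the algorithm in the semi-infinite programming literature, also inherits the accumulation-point convergence results of \citet{blankenship1976infinitely} for the non-terminating case. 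Your closing observation that termination forces $\varepsilon_t^*=\mathrm{OPT}=0$ is also correct (the termination test makes $(\mathbf{w}_t^*,0)$ feasible while $\varepsilon\ge 0$ is imposed), and is a sharper statement than the paper makes explicit, though it rests on the inner problem's optimum being a true global maximum, hence again on compactness and continuity.
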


\begin{proof}
    In the notation of \citep{blankenship1976infinitely}, we define $X^0 = [0, 2 B T]$, $X = \mathcal{W}$, $Y = \mathcal{A}^T \times \mathcal{S}_1 \times \Xi_p^T$. Making the change of variable $\varepsilon = x_0 \in X^0$, $x = \mathbf{w}$, and $y = (\alpha, \st{1}, \xi_{1:T}) \in Y$, and defining the continuous function $f(x, y) = \varepsilon(\alpha, \st{1}, \xi_{1:T}) = V(\alpha, \st{1}, \xi_{1:T}) - V(\mathbf{w}, \st{1}, \xi_{1:T})$, the problem (\ref{eqn:bilevel2-cc}) is equivalent to the problem:
\begin{equation*}
\begin{aligned}
    \min_{(x_0, x) \in X^0 \times X} \, x_0 \qquad
    \mathrm{s.t.} \quad f(x, y) - x_0 \leq 0, \quad \forall y \in Y
\end{aligned}
\end{equation*}
as discussed on p. 262 of \citep{blankenship1976infinitely}. Similarly, Algorithm \ref{alg:stochastic} can be reparameterized as Algorithm 2.1 in the aforementioned paper, with $x_t = \mathbf{w}_t^*$. Thus, the assumption of Theorem 2.1 in \citep{blankenship1976infinitely} holds and $\mathbf{w}_t^*$ is a solution of (\ref{eqn:bilevel2-cc}) as claimed. 
\end{proof}

\paragraph{Remark} Termination of CGPO guarantees that $\mathbf{w}_t^*$ is optimal in $\tilde{\Pi}_\mathcal{W}$, and $\varepsilon_t^*$ is the corresponding optimality gap. Moreover, if $\varepsilon_t^* = 0$ and the MDP is deterministic, then $\pi_{\mathbf{w}_t^*}$ is the {optimal} policy in $\Pi$. This suggests the gap estimate can be used as a principled way to assess the suitability of different policy classes. 


\subsection{Problem Expressivity Class Analysis}

\begin{table}
\centering
 \begin{tabular}{|c || c | c | c |} 
 \cline{2-4}
 \multicolumn{1}{c|}{} & \multicolumn{3}{|c|}{Dynamics / Reward} \\  \hline
 Policy & L & P & N \\
 \hline
 PWS-\{C, D\} & MILP & PP & MINLP \\
 PWL-\{C, D\} & MILP/MIBCP & PP & MINLP \\
 S, L & MILP/PP & PP & MINLP \\
 PW\{S,L\}-\{S,L\} & MILP/PP & PP & MINLP \\ 
 PW\{S,L,P\}-P & PP & PP & MINLP \\
 Q & PP & PP & MINLP \\
 PWN-N & MINLP & MINLP & MINLP  \\
 \hline
 \end{tabular} 
 \caption{Problem classes of the inner/outer optimization problems in CGPO for different expressivity classes of dynamics/reward (columns) and policies (rows).}
\label{table:complexity}
\end{table}

In Table \ref{table:complexity}, we present the relationship between the expressivity classes of policies and state transition dynamics and the corresponding classes of the inner and outer optimization problems. The policy classes of interest include PWS-C, PWL-C, PWS-L, PWL-L, PWN-N and the different variants of piecewise polynomial policies. Meanwhile, expressivity classes for state dynamics and reward include linear, polynomial and general nonlinear functions (their piecewise counterparts generally fall under the same expressivity classes, and are excluded for brevity). Interestingly, as shown in Appendix~\ref{sec:app-linear-case},
when the policy and state dynamics are both linear, the outer problem is a PP. In a similar vein, a PWL-C policy and linear dynamics result in a MIBCP outer problem due to the bilinear interaction between successor state decision variables and policy weights in the linear conditions. 

Our experiments in the next section empirically evaluate PWS-C and PWS-L policies with linear dynamics and Q policies with nonlinear dynamics. This requires solving mixed-integer problems with large numbers of decision variables ranging from MILP to PP to MINLP. 

\section{Empirical Evaluation}
\label{sec:eval}

We empirically validate the performance of CGPO on several MDPs, aiming to answer the following questions:

\begin{enumerate}[leftmargin=*, label={\textbf{Q\arabic*}}]
    \item Does CGPO recover exact solutions when the ideal policy class is known? How does it perform if the optimal policy class is not known?
    \item How do different policy expressivity classes perform for each problem?
    \item Does the worst-case analysis provide further insight about a policy?  
\end{enumerate}

\subsection{Domain Descriptions}

To answer these questions, we evaluate on linear Inventory, linear Reservoir, and nonlinear VTOL (vertical take-off and landing) domains as summarized below. Inventory has provably optimal PWS-S policies, whereas no optimal policy class is known explicitly for Reservoir.
Experimental details and additional results are discussed in Appendix~\ref{sec:app-details} and~\ref{sec:app-more-results} respectively, and a public GitHub repository\footnote{\scriptsize\url{https://github.com/pyrddlgym-project/pyRDDLGym/tree/GurobiCompilerBranch/}} allows reproduction of all results and application of CGPO to arbitrary domains. 

\paragraph{Linear Inventory} State $s_{t}$ describes the {discrete} level of stock available for a single good, action $a_{t} \in [0, B]$ is the {discrete} reorder quantity, and demand $\xi_{t}$ is stochastic and distributed as discrete uniform:
\begin{align*}
    s_{t+1} &= s_t + a_t - \lfloor \xi_t \rfloor, \quad \xi_{t} \sim \mathrm{Uniform}(L, U).
\end{align*}
We permit backlogging of inventories, represented as negative $s_t$. We define costs $C, P$ and $S$ which represent, respectively, the purchase cost, excess inventory cost and shortage cost, and the reward function can be written as
\begin{align*}
    r(\st{t+1}) &= -C \times a_t - P \times (s_{t+1})_+ - S  \times (-s_{t+1})_+,
\end{align*}
where $(\cdot)_+ = \max[0, \cdot]$. We set $B = 10$, $C=0.5$, $P=S=2$, $L=2$, $U=6$. If $P > C$ and $T = \infty$, then a PWS-S policy is provably optimal (otherwise if $T < \infty$, then it may be non-stationary). A planning horizon of $T = 8$ is used. For this domain and reservoir below, we focus on learning factorized piecewise policies, i.e. C, S, PWS-C and PWS-S. 

\paragraph{Linear Reservoir} The goal is to manage the water level in a system of interconnected reservoirs. State $s_{t,r}$ represents the {continuous} water level of reservoir $r$ with capacity $M_r$, action $a_{t,r} \in [0, s_{t,r}]$ is the {continuous} amount of water released, and rainfall $\xi_{t,r}$ is a truncated normally-distributed random variable. Each reservoir $r$ is connected to a set $U(r)$ of upstream reservoirs, thus:
\begin{align*}
    s_{t+1,r} = \min\Big[M_r, \Big(s_{t,r} + (\xi_{t,r})_+ - a_{t,r} + \sum_{d \in U(r)} a_{t,d}\Big)_+ \Big], \quad
    \xi_{t,r} \sim \mathcal{N}(m_r, v_r).
\end{align*}
Reward linearly penalizes any excess water level above $H_r$ and below $L_r$:
\begin{align*}
    r(\st{t+1}) &= \sum_{r=1}^N l_r (L_r - s_{t+1,r})_+ + h_r (s_{t+1,r} - H_r)_+.
\end{align*}
Our experiment uses a two-reservoir system with the following values for reservoir 1 $L_1=20, H_1=80, M_1=100, m_1=5, v_1=5, U(1)=\emptyset$, and the following values for reservoir 2 $L_2=30, H_2=180, M_2=200, m_2=10, v_2=10, U(2)=\lbrace 1 \rbrace$. Costs are identical for all reservoirs and are set to $l_r=-10, h_r=-100$. We use $T = 10$.

\paragraph{Nonlinear VTOL} 
The goal is to balance two masses on opposing ends of a long pole. The state consists of the angle $\theta_t$ and angular velocity $\omega_t$ of the pole, and the action is the force $F_t \in [0, 1]$ applied to the mass:
\begin{wrapfigure}{r}{0.32\textwidth}
    \vspace{-10pt}
  \begin{center}
    \includegraphics[width=\linewidth]{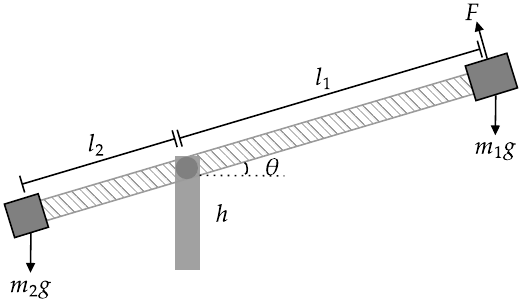}
  \end{center}
  \caption{Nonlinear VTOL system.}
    \vspace{-35pt}
  \label{fig:vtol}
\end{wrapfigure}
\begin{align*}
    \theta_{t+1} &= \max\left[-\sin(h/l_1), \min\left[\sin(h/l_2), \theta_t + \tau \omega_t \right] \right] \\
    \omega_{t+1} &= \omega_t + \frac{\tau}{J} \left(9.8 (m_2 l_2 - m_1 l_1) \cos \theta_t + 150 l_1 F_t \right) \\
    J &= m_1 l_1^2 + m_2 l_2^2 \\
    \textrm{s.t.} \quad & l_1 m_1 \geq l_2 m_2, \quad l_1 > l_2 > h, \quad 0 \leq F_t \leq 1.
\end{align*}

Time is discretized into intervals of $\tau = 0.1$ seconds, and we set $T = 6$. The reward penalizes the difference between the pole angle and a target angle:
\begin{equation*}
    r(F_t, \theta_{t+1}, \omega_{t+1}) = -|\theta_{t+1} - \theta_{target}|.
\end{equation*}
We use values $l_1=1, l_2=0.5, m_1=10, m_2=1, h=0.4, g=9.8$. We optimize for {nonlinear} quadratic policies.

\paragraph{Nonlinear Intercept} To demonstrate an example of {discrete} (Boolean) action policy optimization over nonlinear {continuous} state dynamics with independently moving but interacting projectiles, we turn to the Intercept problem \citep{scala2016}. Space restrictions require us to relegate details and results to Appendix~\ref{sec:app-intercept} and~\ref{sec:app-more-results}, but we remark here that CGPO is able to derive an optimal policy.

\paragraph{Initial State Bounds $\mathcal{S}_1$} For Reservoir, the initial state bounds are set to $[50, 100]$ for reservoir 1 and $[100, 200]$ for reservoir 2. For Inventory, the bounds are $[0, 2]$, and for VTOL and Intercept they are fixed to the initial state of the system.

\begin{figure*}[!htb]
    \centering
    \includegraphics[width=0.33\linewidth,trim={0.6cm 0 0.6cm 0},clip]{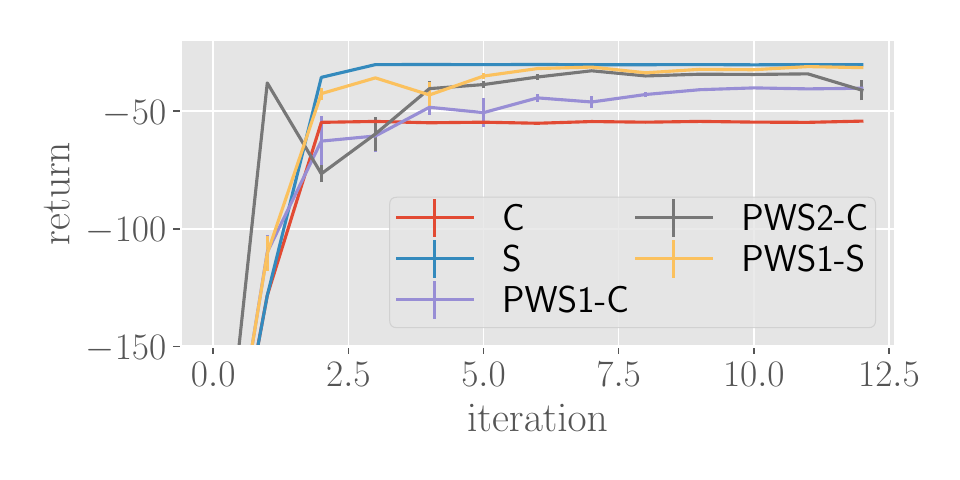}%
    \includegraphics[width=0.33\linewidth,trim={0.7cm 0 0.6cm 0},clip]{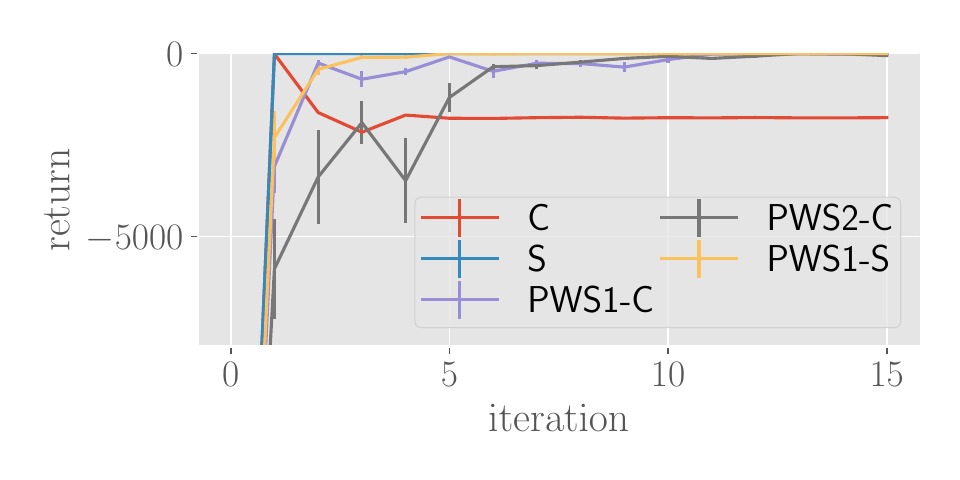}%
    \includegraphics[width=0.33\linewidth,trim={0.6cm 0 0.6cm 0},clip]{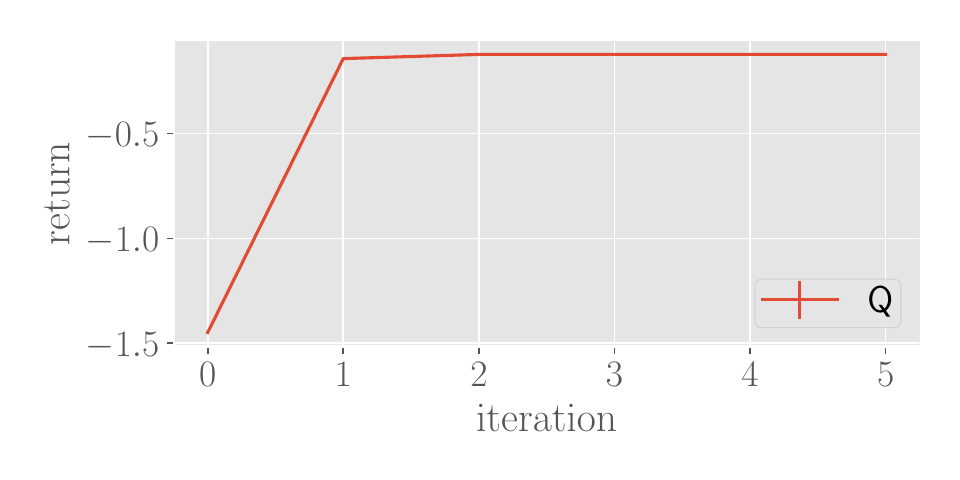}\\       
    \includegraphics[width=0.33\linewidth,trim={0.6cm 0 0.6cm 0},clip]{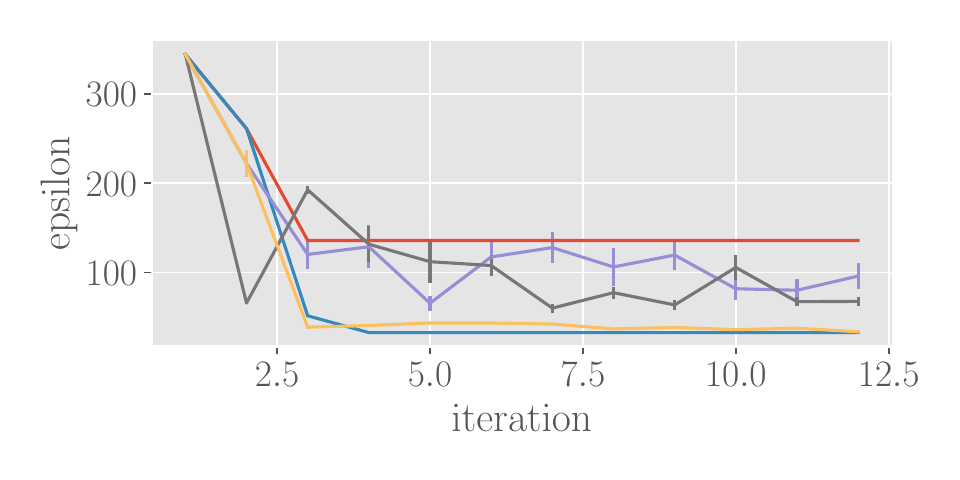}%
    \includegraphics[width=0.33\linewidth,trim={0.7cm 0 0.6cm 0},clip]{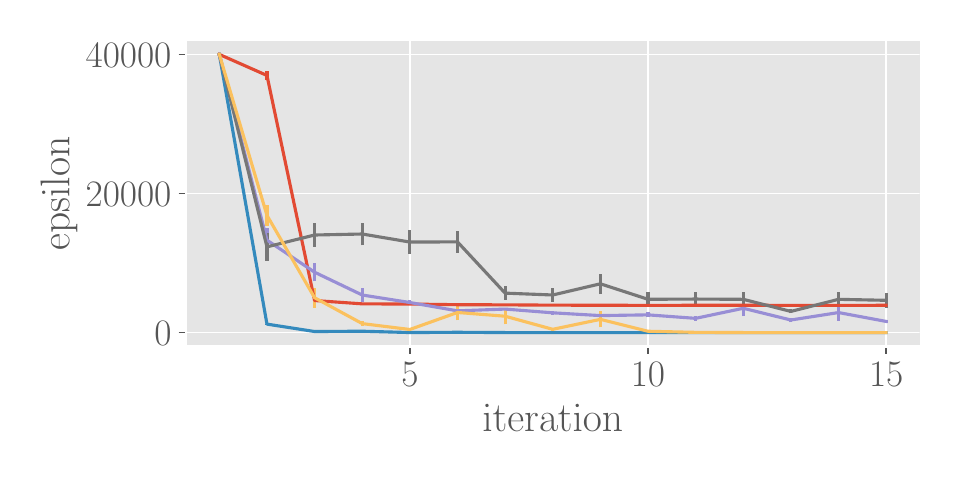}%
    \includegraphics[width=0.33\linewidth,trim={0.6cm 0 0.6cm 0},clip]{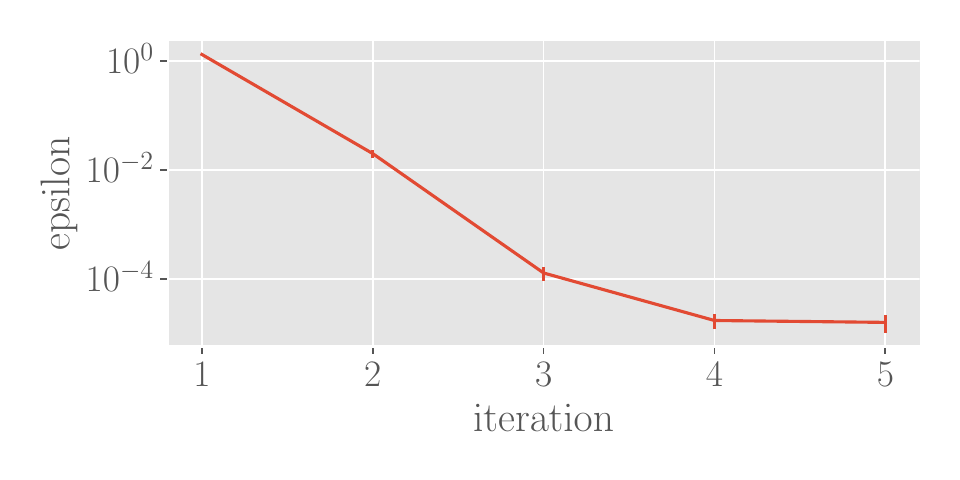}\\
    \caption{\textbf{Reservoir (left), Inventory (middle), VTOL (right):} Simulated return (top row) and worst-case error $\varepsilon^*$ (bottom row) over 100 roll-outs as a function of the number of iterations of constraint generation. Bars represent 95\% confidence intervals calculated using 10 independent runs of CGPO.}
    \label{fig:simulation}
\end{figure*}

\begin{figure*}[!htb]
    \centering
    \begin{tabular}{ccc}
        \resizebox{.33\hsize}{!}{%
            \begin{tabular}{@{}l@{}}
                \(
                    release_1 = \begin{cases}
                        33.23 &\mbox { if } 65.06 \leq level_1 \leq 100 \\
                        1.08  &\mbox{ otherwise }  
                    \end{cases}
                \) \\
                 \(
                    release_2 = \begin{cases}
                        72.91 &\mbox { if } 100 \leq level_2 \leq 200 \\
                        0.2 &\mbox{ otherwise } 
                    \end{cases}
                \)
            \end{tabular}
        } \hspace{0.2em} &
       \resizebox{.33\hsize}{!}{%
            \begin{tabular}{@{}l@{}}
                 \(
                    reorder = \begin{cases}
                        5 - stock &\mbox { if } -1 \leq stock \leq 2 \\
                        3 - stock &\mbox{ otherwise } 
                    \end{cases}
                \)
            \end{tabular}
        } &
        \resizebox{.3\hsize}{!}{%
             \begin{tabular}{@{}l@{}}                
                \includegraphics[width=0.5\linewidth]{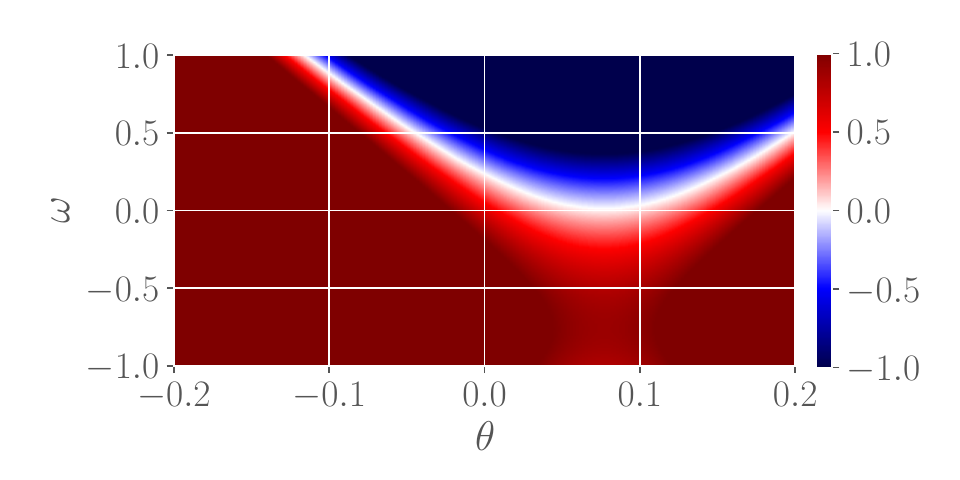}\\
                \(
                    F = 0.6 - 15.4 \theta - 2.3 \omega + 100 \theta^2 -1.5 \omega^2
                \)
            \end{tabular}}
    \end{tabular}
    \caption{\textbf{Reservoir (left), Inventory (middle), VTOL (right):} Examples of optimal policies computed at the end of CGPO.}
    \label{fig:policies}
\end{figure*}

\subsection{Empirical Results}

Fig. \ref{fig:simulation} compares the empirical (simulated) returns and error bounds $\varepsilon^*$ (optimal objective value of the inner problem) across different policy classes. 
Fig. \ref{fig:policies} illustrates examples of policies learned in a typical run of CGPO. As hypothesized, CGPO can recover exact $(s, S)$ control policies for Inventory in the PWS-S class, which together with S policies achieve the lowest error and best return across all policy classes. Interestingly, as Fig. \ref{fig:simulation} shows, C and PWS-C policies perform relatively poorly even for $K = 2$, but as expected, the error decreases as the expressivity of the policy class, and in particular the number of cases, increases. This is intuitive, as Fig. \ref{fig:policies} shows, the reorder quantity is strongly linearly dependent on the current stock, which is not easily represented as PWS-C. In constrast, on Reservoir the class of PWS-C policies perform comparatively well for $K \geq 1$, and achieve (near)-optimality despite requiring a larger number of iterations. As expected, policies typically release more water as the water level approaches the upper target bound. Finally, the optimal policy for VTOL applies a large upward force when the angle or angular velocity are negative, with relative importance being placed on angle. As Fig. \ref{fig:policies}~(right) shows, the force is generally greater when either the angle is below the target angle or the angular velocity is negative, and equilibrium is achieved by applying a modest upward force between the initial angle (0.1) and the target angle (0).

Fig. \ref{fig:other-policies-inventory} and \ref{fig:other-policies-reservoir} provide examples of policies learned in other expressivity classes for Inventory and Reservoir problems, respectively. In all cases, policies remain intuitive and easy to explain even as the complexity increases. For instance, PWS2-C policies for Inventory order more inventory as the stock level decreases, while for Reservoir, the amount of water released is increasing in the water level. 

\begin{figure*}[!ht]
    \centering
    \begin{tabular}{c c c c}
       \resizebox{.09\hsize}{!}{%
            \begin{tabular}{@{}l@{}}
                 \(
                    order = 3
                \)
            \end{tabular}
        }\hspace{0.5em} \hfill &
        \resizebox{.16\hsize}{!}{%
            \begin{tabular}{@{}l@{}}
                \(
                    order = 4 - stock
                \)
            \end{tabular}
        } \hspace{0.5em}\hfill &
        \resizebox{.32\hsize}{!}{%
             \begin{tabular}{@{}l@{}}      
                \(
                    order = \begin{cases}
                        8 &\mbox{ if } -13 \leq stock \leq -2 \\
                        2 &\mbox{ otherwise}
                    \end{cases} 
                \)
            \end{tabular}
        } \hspace{0.5em}\hfill &
        \resizebox{.32\hsize}{!}{%
             \begin{tabular}{@{}l@{}}      
                \(
                    order = \begin{cases}
                        6 &\mbox{ if } -10  \leq stock \leq -1 \\
                        1 &\mbox{ if } 2  \leq stock \leq 2 \\
                        3 &\mbox{ otherwise}
                    \end{cases} 
                \)
            \end{tabular}}
    \end{tabular}
    \caption{\textbf{Inventory:} Examples of C, S, PWS1-C and PWS2-C (left to right) policies computed by CGPO.}
    \label{fig:other-policies-inventory}
\end{figure*}

\begin{figure*}[!ht]
    \centering
    \begin{tabular}{cc}
       \resizebox{.16\hsize}{!}{%
            \begin{tabular}{@{}l@{}}
                 \(
                    release_1 = 19.73
                \) \\
                \(
                    release_2 = 55.87
                \)
            \end{tabular}
        } \hfill &
        \resizebox{.3\hsize}{!}{%
            \begin{tabular}{@{}l@{}}
                \(
                    release_1 = level_1 - 20
                \) \\
                 \(
                    release_2 = 0.93 \times level_2 - 29.74
                \)
            \end{tabular}
        }\vspace{0.6em} \\
        \resizebox{.40\hsize}{!}{%
             \begin{tabular}{@{}l@{}}      
                \(
                    release_1 = \begin{cases}
                        1.39 &\mbox{ if } 30.62 \leq level_1 \leq 68.66 \\
                        40.71 &\mbox{ if } 70.93 \leq level_1\leq 100 \\
                        10.622 &\mbox{ otherwise}
                    \end{cases} 
                \) \\
                \(
                    release_2 = \begin{cases}
                        80.78 &\mbox{ if } 159.93 \leq level_2 \leq 200 \\
                        38.69 &\mbox{ if } 144.21 \leq level_2 \leq 159.11 \\
                        18.25 &\mbox{ otherwise}
                    \end{cases}                 
                \)
            \end{tabular}
        } \hspace{0.6em}\hfill &
        \resizebox{.5\hsize}{!}{%
             \begin{tabular}{@{}l@{}}      
                \(
                    release_1 = \begin{cases}
                        level_1 - 50 &\mbox{ if } 20  \leq level_1 \leq 100 \\
                        2 \times level_1 - 100 &\mbox{ otherwise}
                    \end{cases} 
                \) \\
                \(
                    release_2 = \begin{cases}
                        level-2 - 37.8 &\mbox{ if } 38.9 \leq level_2 \leq 200 \\
                        1.17 \times level_2 - 235.5 &\mbox{ otherwise}
                    \end{cases}                 
                \)
            \end{tabular}}
    \end{tabular}
    \caption{\textbf{Reservoir:} Examples of C, S, PWS2-C and PS1-S (left to right, top to bottom) policies computed by CGPO.}
    \label{fig:other-policies-reservoir}
\end{figure*}

\subsection{Ablations}

\begin{figure*}[!htb]
    \centering
    \includegraphics[width=0.33\linewidth,trim={0.5cm 0 0.5cm 0},clip]{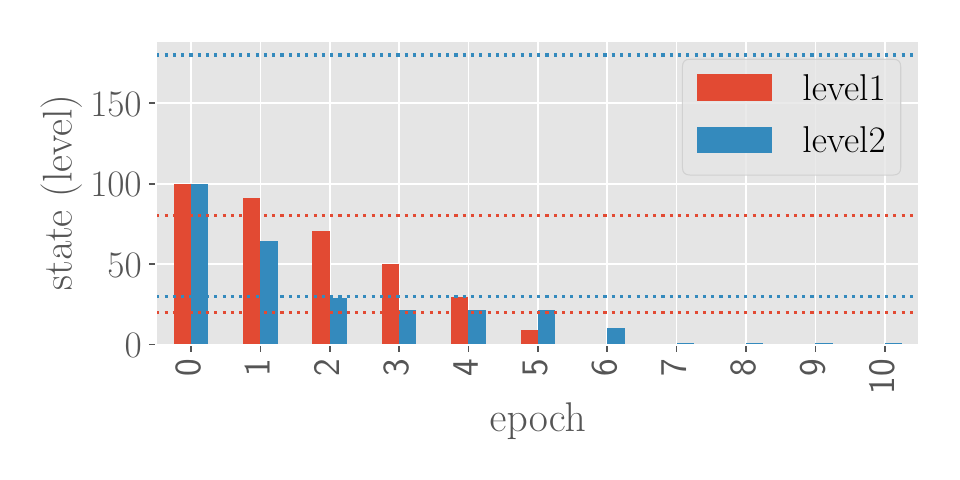}%
    \includegraphics[width=0.33\linewidth,trim={0.5cm 0 0.5cm 0},clip]{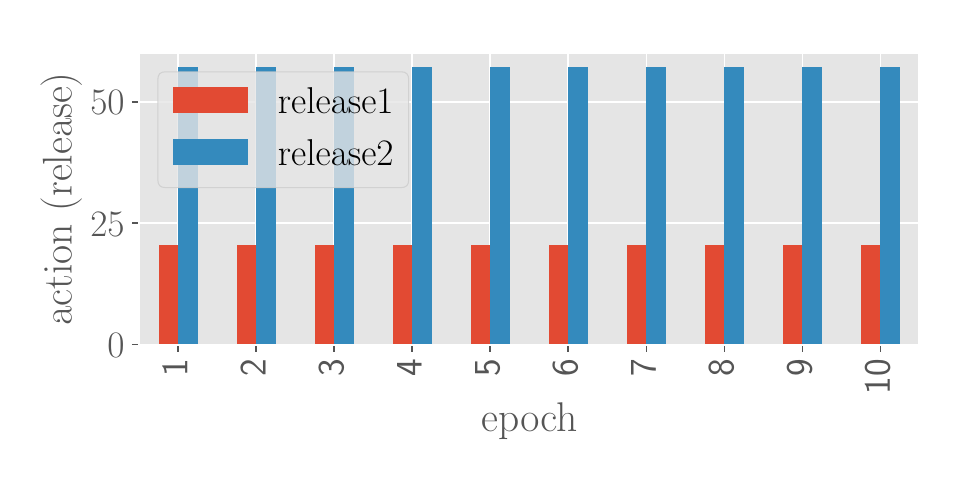}%
    \includegraphics[width=0.33\linewidth,trim={0.5cm 0 0.5cm 0},clip]{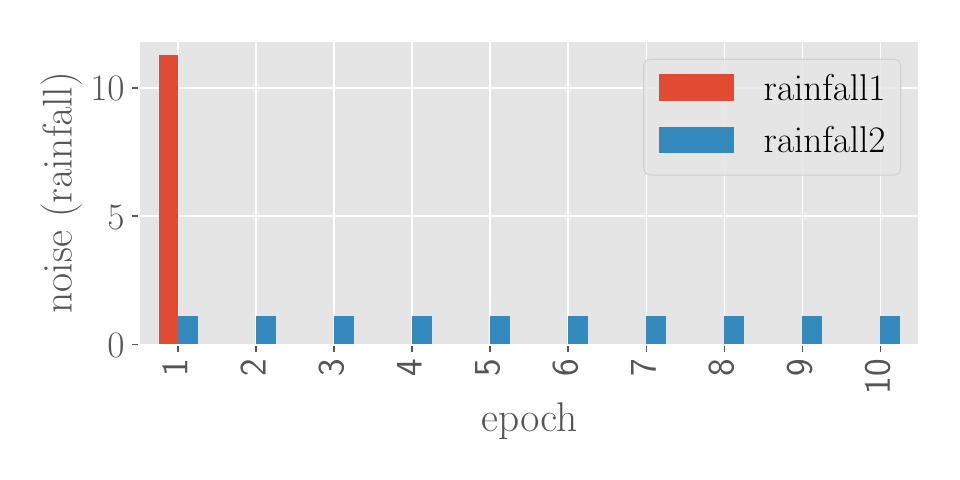} \\
    \includegraphics[width=0.33\linewidth,trim={0.5cm 0 0.5cm 0},clip]{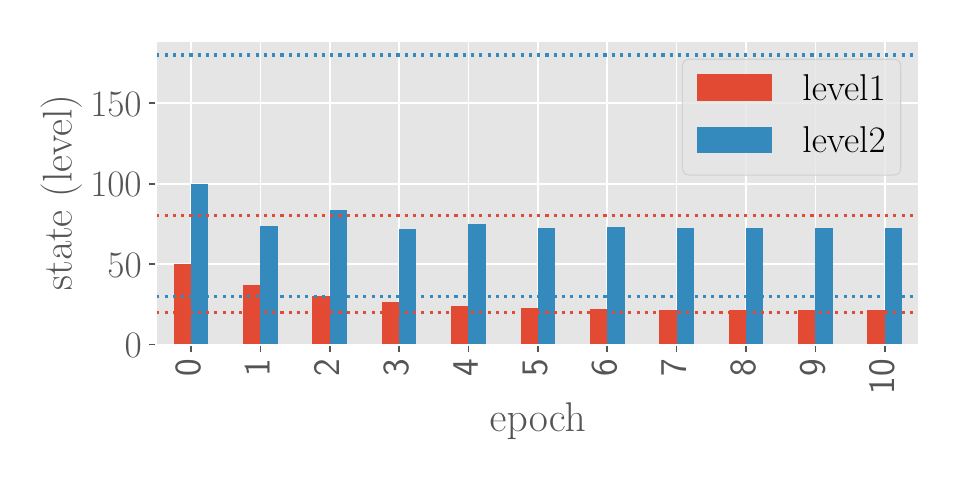}%
    \includegraphics[width=0.33\linewidth,trim={0.5cm 0 0.5cm 0},clip]{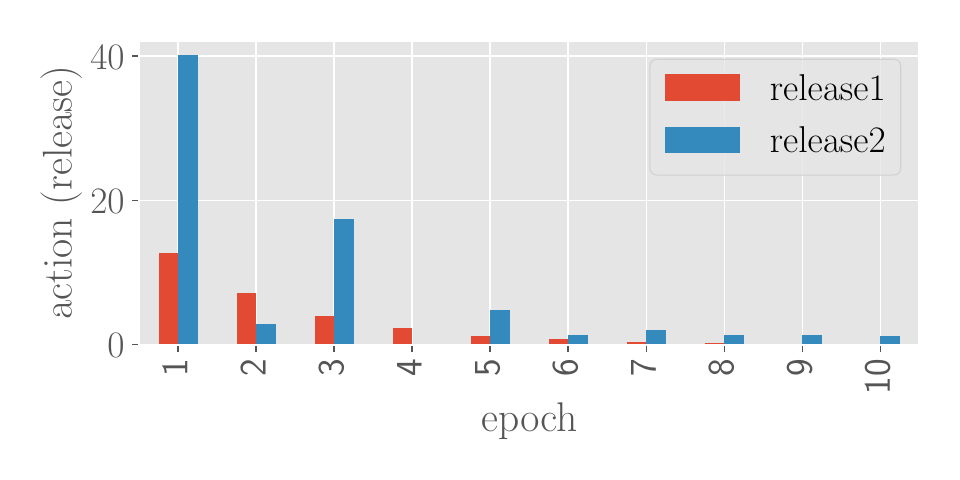}%
    \includegraphics[width=0.33\linewidth,trim={0.5cm 0 0.5cm 0},clip]{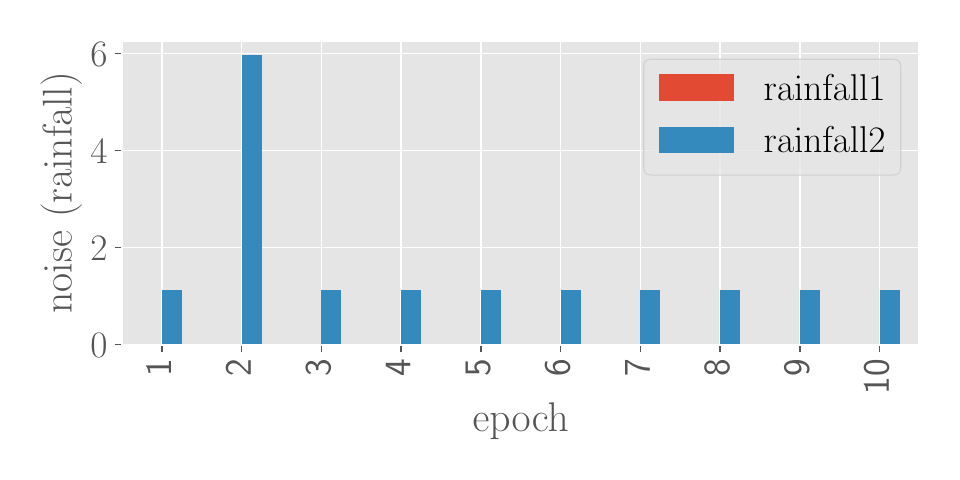}
    \caption{\textbf{Reservoir:} Worst-case state trajectory (left), actions (middle), and noise (right) for C (top) and S policies (bottom) computed by CGPO.  The more expressive S policy provides much more stable water levels.}
    \label{fig:worst}
\end{figure*}

\begin{figure*}[!htb]
    \centering
    \includegraphics[width=0.33\linewidth,trim={0.5cm 0 0.5cm 0},clip]{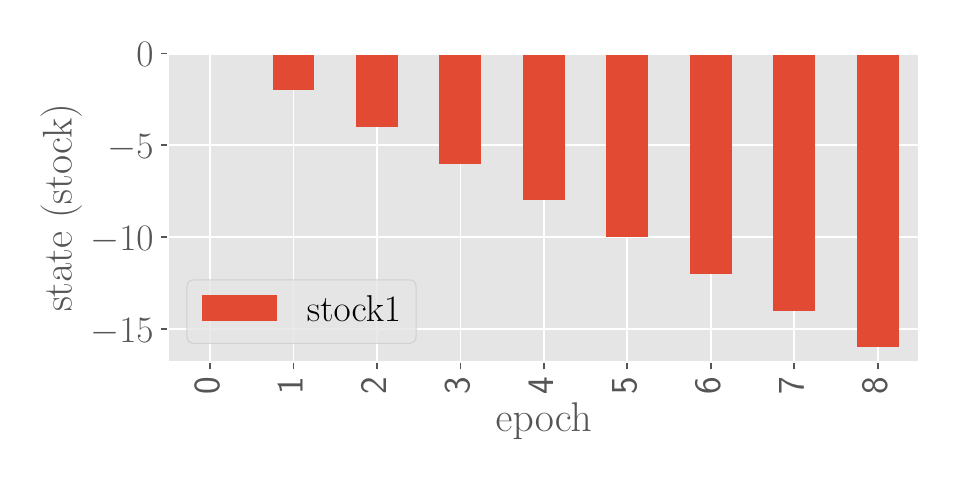}%
    \includegraphics[width=0.33\linewidth,trim={0.5cm 0 0.5cm 0},clip]{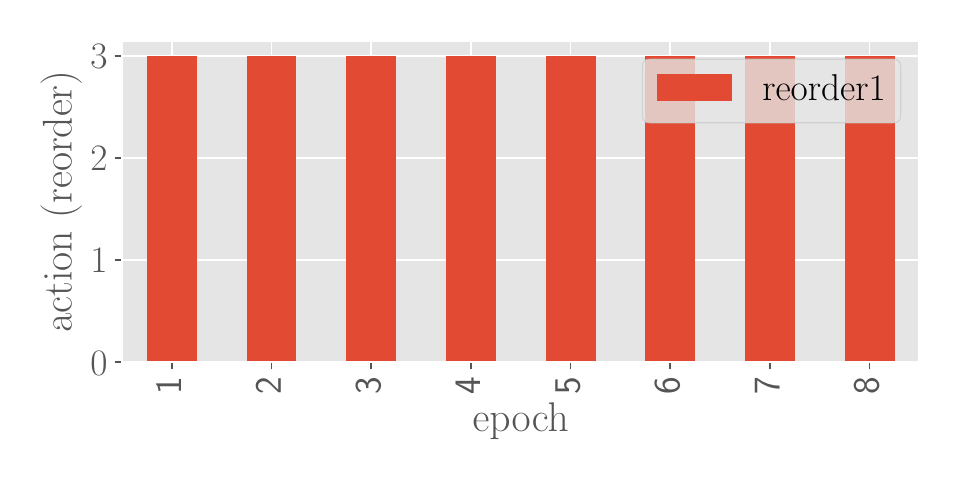}%
    \includegraphics[width=0.33\linewidth,trim={0.5cm 0 0.5cm 0},clip]{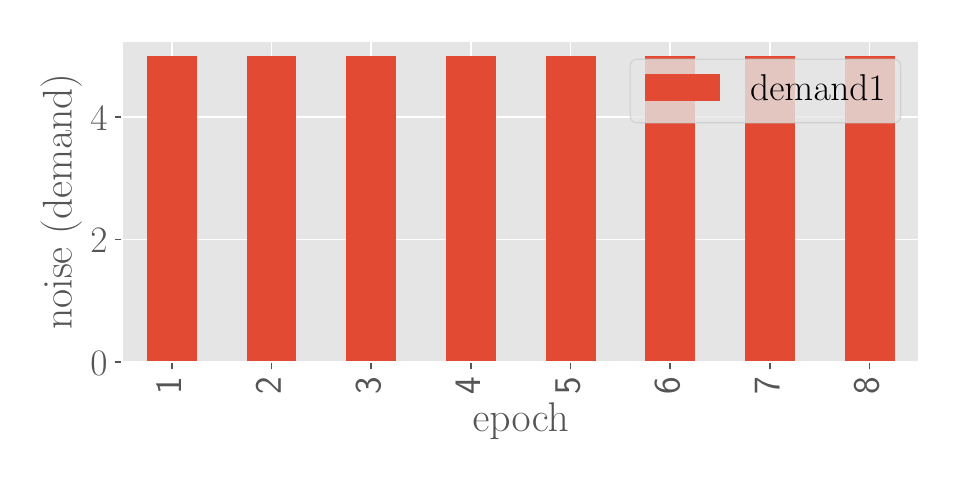}\\
    \includegraphics[width=0.33\linewidth,trim={0.5cm 0 0.5cm 0},clip]{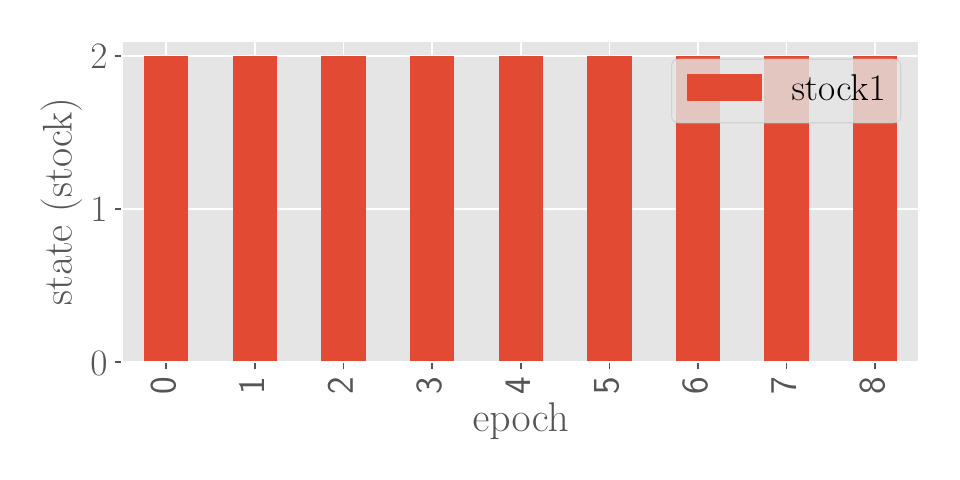}%
    \includegraphics[width=0.33\linewidth,trim={0.5cm 0 0.5cm 0},clip]{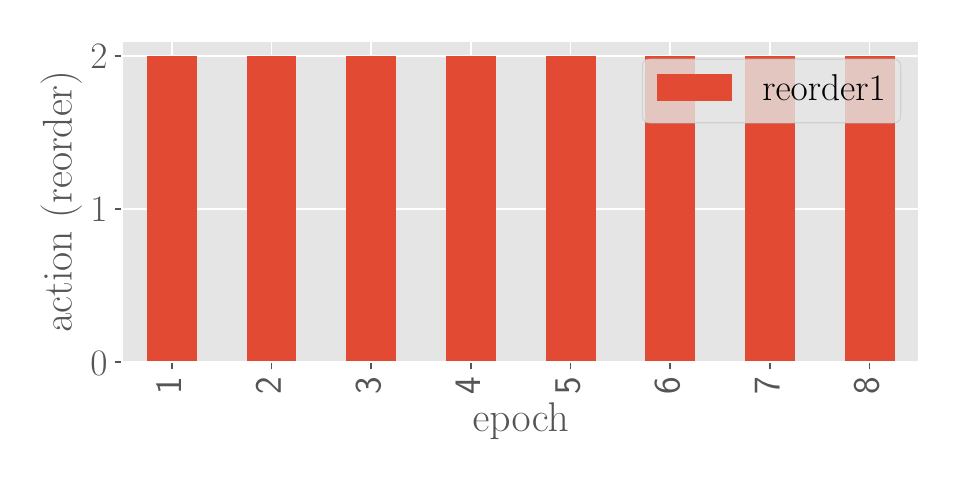}%
    \includegraphics[width=0.33\linewidth,trim={0.5cm 0 0.5cm 0},clip]{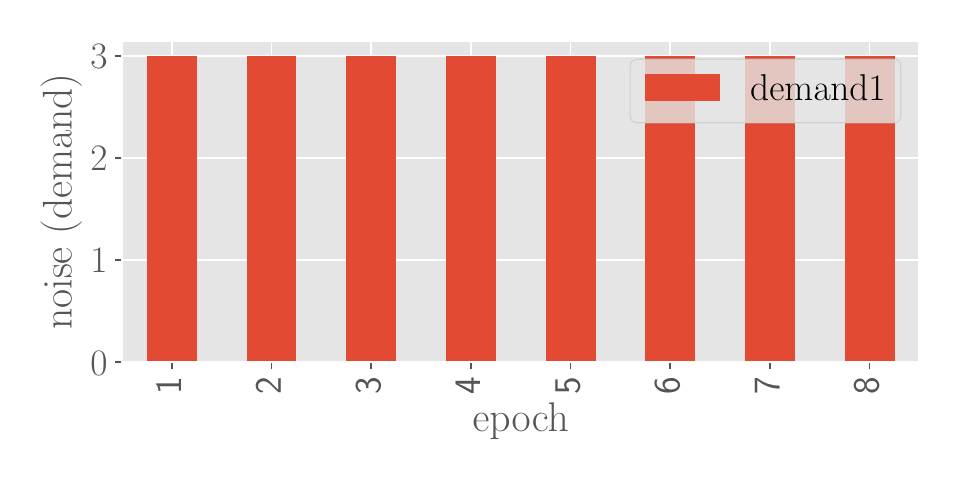}
    \caption{\textbf{Inventory:} Worst-case state trajectory (left), actions (middle), and disturbances/noise (right) for C (top) and S policies (bottom) computed by CGPO.}
    \label{fig:worst-inventory}
\end{figure*}

\begin{figure*}
    \centering
    \includegraphics[width=0.35\linewidth,trim={0.5cm 0 0.5cm 0},clip]{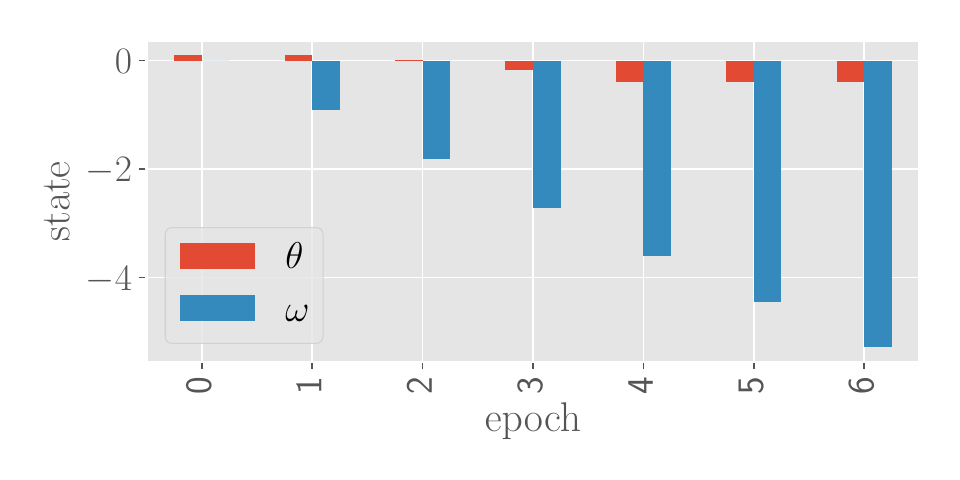}\hspace{0.5em}%
    \includegraphics[width=0.35\linewidth,trim={0.5cm 0 0.5cm 0},clip]{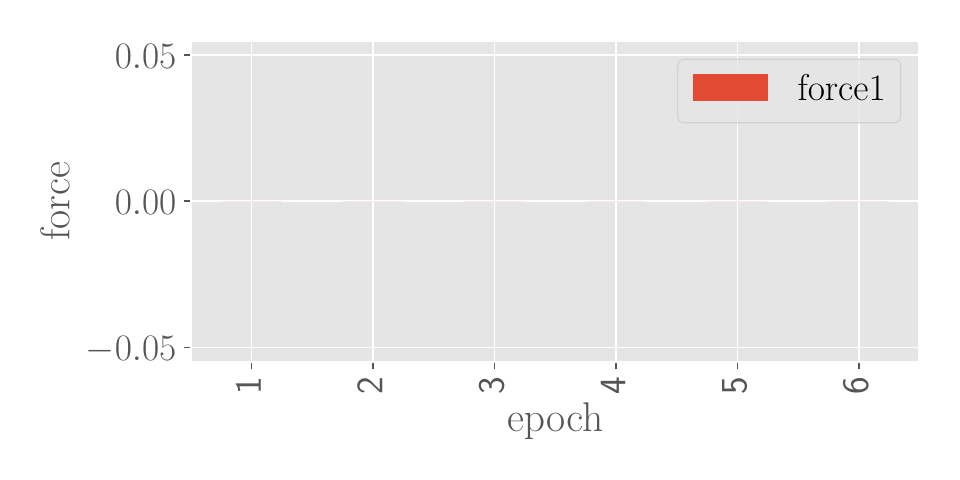}\\
    \includegraphics[width=0.35\linewidth,trim={0.5cm 0 0.5cm 0},clip]{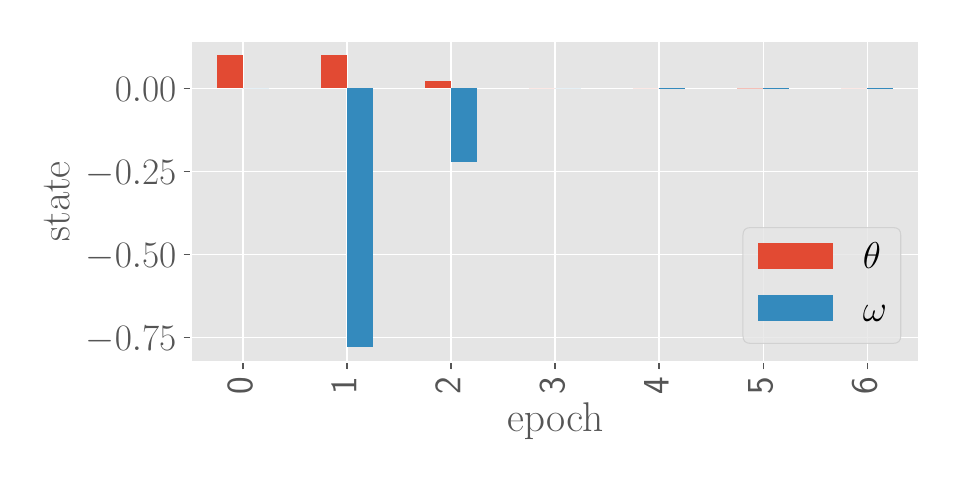}\hspace{0.5em}%
    \includegraphics[width=0.35\linewidth,trim={0.5cm 0 0.5cm 0},clip]{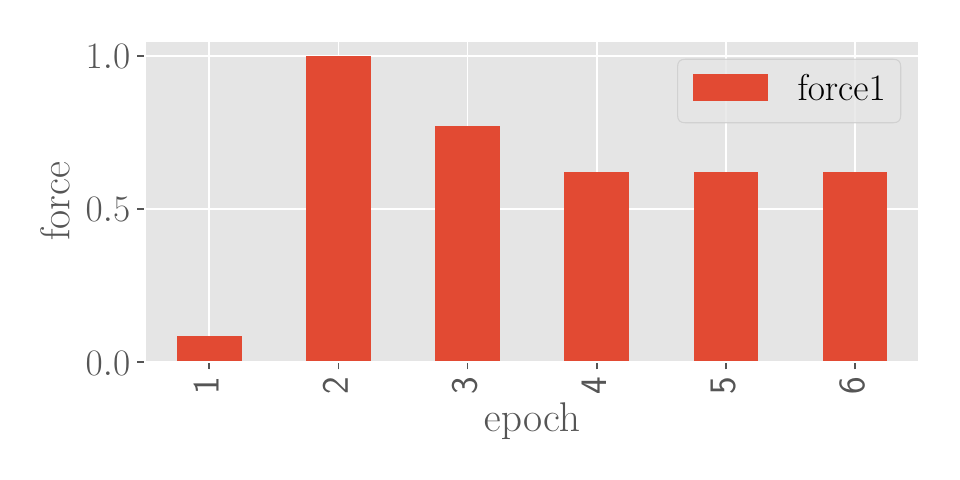}
    \caption{\textbf{VTOL:} Worst-case state trajectory (top), and actions (bottom) for Q policies. First (last) column corresponds to the first (last) iterations of CGPO.}
    \label{fig:worst-vtol}
\end{figure*}

\paragraph{Worst-Case Analysis} Fig. \ref{fig:worst} plots the state trajectory, actions and noise (i.e. rainfall) that lead to worst-case performance of C and S policies after the last iteration of CGPO on Reservoir. Here, we observe that the worst-case performance for the optimal constant-value (C) policy occurs when the rainfall is low and both reservoirs become empty.  Given that the cost of overflow exceeds underflow, this is expected as the C policy must release enough water to prevent high-cost overflow events during high rainfall at the risk of water shortages during droughts. In contrast, the optimal linear (S) policy maintains safe water levels even in the worst-case scenario, since it avoids underflow and overflow events by releasing water (linearly) proportional to the water level. Similar to Reservoir, a non-trivial worst-case scenario is identified by CGPO for Inventory (Fig. \ref{fig:worst-inventory}) for optimal C policies but not S policies. This scenario corresponds to very high demand that exceeds the constant reorder quantity, causing stock-outs. For VTOL (Fig. \ref{fig:worst-vtol}), there is no worst-case scenario with non-zero cost upon convergence at the final iteration of CGPO (bottom row). We have also shown the worst-case scenario computed at the first iteration prior to achieving policy optimality, where a worst-case scenario corresponds to no force being applied causing the system to eventually lose balance.

\begin{table}[!ht]
\centering
\begin{tabular}{ |c||c|c|c|c|  }
 \hline
 & \multicolumn{2}{|c|}{Inventory} & \multicolumn{2}{|c|}{Reservoir} \\
 \hline
 Policy & Inner & Outer & Inner & Outer \\
 \hline
 C & 8 / 121 / 72 & 96 / 673 / 385 & 20 / 0 / 822 & 300 / 0 / 5823 \\
 PWS-S & 32 / 129 / 72 & 384 / 772 / 387 & 80 / 0 / 842 & 1200 / 0 / 7133 \\
 \hline
 C & 160 / 0 / 64 & 780 / 0 / 480 & 540 / 0 / 320 & 4035 / 0 / 2400 \\
 PWS-S & 168 / 0 / 112 & 709 / 168 / 1056 & 560 / 0 / 440 & 3797 / 540 / 4200 \\
 \hline
\end{tabular}
 \caption{Number of binary/integer/real-valued decision variables in the inner and outer optimization problems at the last iteration of CGPO (top half), and the number of linear/quadratic/general constraints (bottom half).}
\label{table:scale}
\end{table}

\paragraph{Analysis of Problem Size}
To understand how the problem sizes in CGPO scale across differing expressivity classes of policies and dynamics, Table \ref{table:scale} shows the number of decision variables and constraints in the MIPs at the last iteration of constraint generation.
For VTOL with a quadratic (Q) policy, the number of variables in the inner and outer problems are 0 / 0 / 212 and 0 / 0 / 512, respectively, while the number of constraints are 114 / 18 / 72 and 270 / 95 / 155. The number of decision variables/constraints are fixed in the inner problem and grow linearly with iterations in the outer problem.  
The number of variables and constraints do not grow significantly with the policy class expressiveness. 

\section{Conclusion}
We presented CGPO, a novel bilevel mixed-integer programming formulation for DC-MDP policy optimization in predefined expressivity classes with {bounded optimality guarantees}.  {\bf To the best of our knowledge, CGPO is the first algorithm capable of providing bounded error guarantees for a broad range of DC-MDPs and policy classes.}  We used constraint generation to decompose CGPO into an inner problem that produces an adversarial worst-case constraint violation (i.e., trajectory), and an outer problem that 
improves the policy w.r.t.\ these trajectories. 
Across diverse domains and policy classes, we showed that the learned policies (some provably optimal) and their worst-case performance/failure modes were easy to interpret, while maintaining a manageable number of variables and constraints. One challenge with CGPO lies in the significant computational demands needed to tackle larger problems. To overcome this limitation, future work should focus on approximating the solutions to the inner and outer problems by leveraging tools from reinforcement learning.

\section{Appendix}

In this supplement to the main paper, we provide further experimental details for successful implementation of CGPO. We also provide a worked example illustrating the mechanics of CGPO, additional convergence/performance results and worst-case analysis for the domains that were left out of the main text due to space limitations. We conclude with a discussion of related work.

\subsection{Worked Example}
\label{sec:app-worked-example}

\paragraph{Domain Description} Let the state $s_t$ denote the continuous position of a particle on $\mathbb{R}$ at epoch $t = 1, 2$, and let the action be a continuous unbounded displacement $a_t$ applied to the particle, so that the state updates according to $s_{t+1} = s_t + a_t$. Thus, $\states = \actions = \reals$ and suppose further that $\states_1 = [0, 5]$. If the target position is designated as $10$, we formulate the reward as $r(s_{t+1}) = -|s_{t+1} - 10|$, and thus the value function is $V(a_1, s_1) = -|s_1 + a_1 - 10|$. 

\paragraph{The Outer Problem} For ease of illustration, we focus our attention on {linear} policies $\pi_{w,b}$ of the form $a_t = b + w s_t$, where $(w, b)$ are real-valued decision variables. The goal is to find the optimal policy parameters:
\begin{equation*}
    \min_{(w, b) \in \mathbb{R}^2} \max_{s_1 \in [0, 5]} \max_{a_1 \in \mathbb{R}} \, [V(a_1, s_1) - V((w, b), s_1)],
\end{equation*}
where $V((w, b), s_1) = -|s_1 + b + w s_1 - 10|$ is the value of the linear policy. However, this problem is not directly solvable due to the inner $\max$, but it can be written as
\begin{equation*}
    \min_{(w, b) \in \mathbb{R}^2,\, \varepsilon \in [0, \infty)} \varepsilon \qquad \textrm{s.t.} \quad \varepsilon \geq V(a_1, s_1) - V((w, b), s_1),
\end{equation*}
where constraints are enumerated by all possible $(s_1, a_1)$ pairs. Since the number of constraints is infinite for continuous states or actions, it suggests a constraint generation approach, where a large set of diverse $(s_1, a_1)$ pairs are used to ``build up" the constraint set one at a time, decoupling the overall problem into a sequence of solvable MIPs. However, a key question arises: \textbf{{which} state-action pairs $(s_1, a_1)$ should be considered for inclusion into the outer problem?} While many different reasonable choices exist, a particularly logical choice is to select the state-action pairs on which the policy $\pi_{w,b}$ parameterized by $(w,b)$ performs {worst}.

\paragraph{Solve the Outer Problem} The outer problem thus maintains a finite subset of constraints from the infinitely-constrained problem. We begin with an arbitrary $(s, a)$-pair, which forms the first constraint of the outer problem. For illustration, we select $(s_1, a_1) = (0, 0)$, thus with the constraint 
\begin{equation*}
    \varepsilon \geq V(0, 0) - V((w, b), 0) = -10 + |b - 10|,
\end{equation*}
the outer problem becomes:
\begin{equation*}
    \min_{w, b \in \mathbb{R},\, \varepsilon \in [0, \infty)} \, \varepsilon \qquad \mathrm{s.t.} \quad \varepsilon \geq -10 + |b - 10|.
\end{equation*}
Recognizing that the minimum of the r.h.s. of the constraint is attained at $b^* = 10$, and $w^*$ arbitrary, we select $(w^*, b^*) = (0, 10)$ as the solution of the outer problem with optimal $\varepsilon = 0$. Next, we will try to improve upon the policy by adding an additional constraint to the outer problem.

\paragraph{Solve the Inner Sub-Problem} To accomplish this, we need to add a new scenario $(s_1, a_1)$ in relation to which the linear policy with parameters $(w, b) = (0, 10)$ performs worst. Thus, we solve:
\begin{align*}
    \max_{s_1 \in [0, 5], a_1 \in \reals} [ V(a_1, s_1) - V((w, b), s_1) ] = \max_{s_1 \in [0, 5], a_1 \in \reals} [ -|s_1 + a_1 - 10| + s_1],
\end{align*}
from which we find $(s_1^*, a_1^*) = (5, 5)$. The optimal value of this problem is positive, so it represents a potentially binding constraint in the outer problem. Thus, we could improve upon the policy by adding the corresponding constraint.

\paragraph{Solve the Outer Problem Again} Adding the constraint to the outer problem:
\begin{equation*}
        \min_{(w, b) \in \reals^2,\, \varepsilon\in[0, \infty)} \quad \varepsilon \qquad \textrm{s.t.} \quad  \varepsilon \geq -10 + |b - 10|, \quad \varepsilon \geq |b + 5 w - 5|,
\end{equation*}
whose new solution can be found as $b^* = 10, w^* = -1$ with optimal $\varepsilon = 0$. Finally, the worst-case scenario for this new policy can be found from the inner sub-problem; one solution is $(s_1^*, a_1^*) = (0, 10)$ with value zero. This corresponds to a non-binding constraint, and thus we terminate with the optimal linear policy $\pi^*(s_1) = 10 - s_1$.

\subsection{Linear Policy + Linear Dynamics Result in a Polynomial Class}
\label{sec:app-linear-case}

Suppose both the policy and transition functions are linear in the state. Specifically, consider the transition function $s' = s + a$, and the policy $a = w s$. Then, defining $s$ the initial state, $s'$ the immediate successor state following $s$, and $s''$ the immediate successor state following $s'$:
\begin{align*}
    s' &= s + a = s + w s = (w + 1) s \\
    a' &= w s' = w (w + 1) s = O(w^2) \\
    s'' &= s' + a' = (w + 1) s + w (w + 1) s = (w + 1)^2 s \\
    a'' &= w s'' = w (w + 1)^2 s = O(w^3),
\end{align*}
which are polynomial in $w$. 

\subsection{The Nonlinear Intercept Problem}
\label{sec:app-intercept}

Intercept is a nonlinear domain with Boolean actions. It involves two objects moving in continuous trajectories in a subset of $\mathbb{R}^2$, in which one object (e.g. missile, bird) flies in a parabolic arc across space towards the ground, and must be intercepted by a second object (e.g. anti-ballistic missile, predator) that is fired from a fixed position on the ground. While the problem is best described in continuous time, we study a discrete-time version of the problem. The state includes the position $(x_t, y_t)$ of the missile at each decision epoch, as well as ``work" variables indicating whether the interceptor has already been fired $f_t$ as well as its vertical elevation $i_t$. Meanwhile, the action $a_t$ is Boolean-valued and indicates whether the interceptor is fired at a given time instant. 

The state transition model can be written as:
\begin{align*}
    x_{t+1} &= x_t + v_x, \qquad y_{t+1} = h - \frac{1}{2}g x_t^2 \\
    f_{t+1} &= f_t \lor a_t, \qquad i_{t+1} = \begin{cases}
        i_t &\mbox{ if } f_t = 0 \\
        i_t + v_y &\mbox{ if } f_t = 1
    \end{cases},
\end{align*}
ignoring the gravitational interaction of the interceptor. Interception happens whenever the absolute differences between the coordinates of the missile and the interceptor are within $\delta$, so the reward is
\begin{equation*}
    r(s_{t+1}) = \begin{cases}
        1 &\mbox{ if } f_{t+1} \land | x_{t+1} - i_x | \leq \delta \land | y_{t+1} - i_{t+1} | \leq \delta \\
        0 &\mbox{ otherwise}.
    \end{cases}.
\end{equation*}
We set $v_x = 0.1, h = 5, g = 9.8, i_x = 0.7$. 

We study Boolean-valued policies with linear constraints (i.e., B, PWL-B) that take into account the position of the missile at each epoch, i.e. PWL1-B policies of the form
\begin{equation*}
    a_t(x_t, y_t) = \begin{cases}
        a_1 &\mbox{ if } l \leq w_1 x_t + w_2 y_t \leq u \\
        a_2 &\mbox{ otherwise}
    \end{cases}
\end{equation*}
where $a_1, a_2 \in \lbrace 0, 1 \rbrace$ and $l, u, w_1, w_2$ are all tunable parameters.

\subsection{Additional Implementation Details}
\label{sec:app-details}

\paragraph{Computing Environment} We use the Python implementation of the Gurobi Optimizer (GurobiPy) version 10.0.1, build v10.0.1rc0 for Windows 64-bit systems, with an academic license. Default optimizer settings have been used, with the exception of NumericFocus set to 2 in order to enforce numerical stability, and the MIPGap parameter set to 0.05 to terminate when the optimality gap reaches 5\%.  To compute the tightest possible bounds on decision variables in the MIP compilation, we use interval arithmetic \citep{hickey2001}. All experiments were conducted on a single machine with an Intel 10875 processor (base at 2.3 GHz, overclocked at 5.1 GHz) and 32 GB of RAM. A runtime cap of 2 hours was enforced for each run. However, we do not report runtimes due to the highly stochastic and unpredictable nature of the runs.

\paragraph{Domain Description Files}
Domains are described in \emph{Relational Dynamic influence Diagram Language} (RDDL), a structured planning domain description language particularly well-suited for modelling problems with stochastic effects \citep{sanner2010}. To facilitate experimentation, we wrote a general-purpose routine for compiling RDDL code into a Gurobi mixed integer program formulation using the \texttt{pyRDDLGym} interface \citep{taitler2022}. 

\paragraph{Action Constraints} One important issue concerns how to enforce constraints on valid actions. Two valid approaches include (1) explicitly constraining actions in RDDL constraints (state invariants and action preconditions) by compiling them as MIP constraints, or (2) implicitly clipping actions in the state dynamics (conditional probability functions in RDDL). Crucially, we found the latter approach performed better during optimization, since constraining actions inherently limits the policy class to a subset of weights that can only produce legal actions across the initial state space. This not only significantly restricts policies (i.e. for linear valued policies, the weights would be constrained to a tight subset where the output for every possible state would be a valid action) but also poses challenges for the optimization process, which is significantly complicated by these action constraints. 

\paragraph{Encoding Constraints in Gurobi} Mathematical operations, such as strict inequalities, cannot be handled explicitly in Gurobi. To perform accurate translation of such operations in our code-base, we used indicator/binary variables. For instance, to model the comparison $a > b$ for two numerical values $a, b$, we define a binary variable $y \in \lbrace 0, 1 \rbrace$ and error parameter $\epsilon > 0$ constrained as follows:
\begin{align*}
    y == 1 \implies a \geq b + \epsilon, \qquad 
    y == 0 \implies a \leq b.
\end{align*}
In practice, $\epsilon$ is typically set larger than the smallest positive floating point number in Gurobi (around $10^{-5}$), but is often problem-dependent. We derived optimal policies for all domains using $\epsilon = 10^{-5}$, except Intercept where we needed to use a larger value of $\epsilon = 10^{-3}$ to allow Gurobi to correctly distinguish between the two cases above.

\subsection{Additional Experimental Results}
\label{sec:app-more-results}

\begin{figure*}[!htb]
    \centering
    \includegraphics[width=0.45\linewidth]{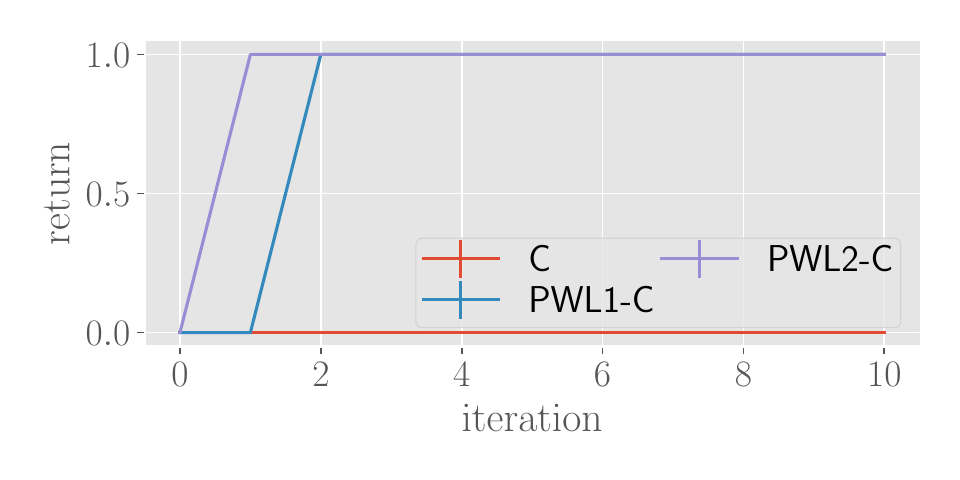}%
    \includegraphics[width=0.45\linewidth]{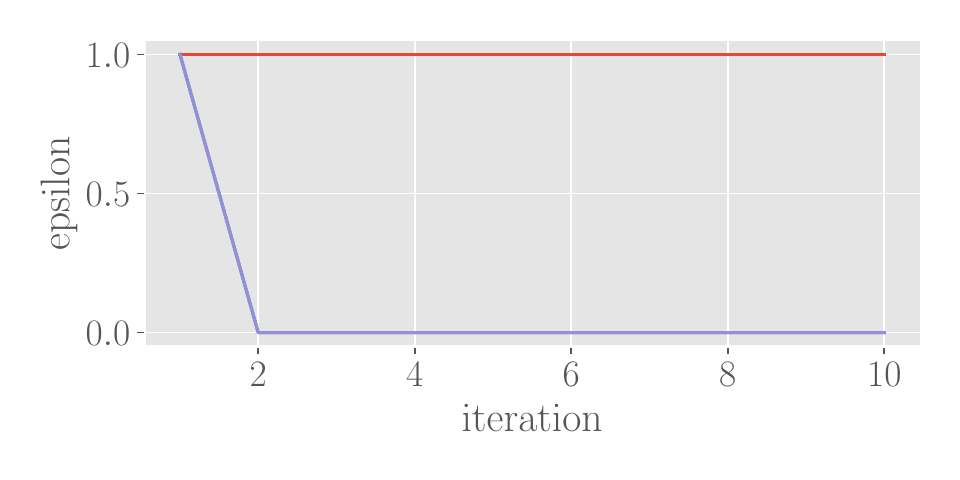}
    \caption{\textbf{Intercept:} Simulated return (left) and optimal value of $\varepsilon^*$ (right) as a function of the number of iterations of constraint generation.}
    \label{fig:missile}
\end{figure*}

\begin{figure*}[!htb]
    \centering
    \begin{tabular}{ccc}
        \resizebox{.09\hsize}{!}{%
            \begin{tabular}{@{}l@{}}
                 \(
                    fire = 1
                \)
            \end{tabular}
        } \hspace{0.5em} \hfill &
        \resizebox{.4\hsize}{!}{%
            \begin{tabular}{@{}l@{}}
                 \(
                    fire = \begin{cases}
                        0 &\mbox{ if } -0.136 \leq -0.2 x - 0.02 y \leq 0 \\
                        1 &\mbox{ otherwise}
                    \end{cases}  
                \)
            \end{tabular}
        } \hspace{0.5em} \hfill &
        \resizebox{.4\hsize}{!}{%
            \begin{tabular}{@{}l@{}}
                 \(
                    fire = \begin{cases}
                        0 &\mbox{ if } -0.136 \leq -0.2 x - 0.02 y \leq 0 \\
                        0 &\mbox{ if } 0.001 \leq -x \leq 1 \\
                        1 &\mbox{ otherwise}
                    \end{cases}  
                \)
            \end{tabular}
        }
    \end{tabular} \\
    \includegraphics[width=0.3\linewidth]{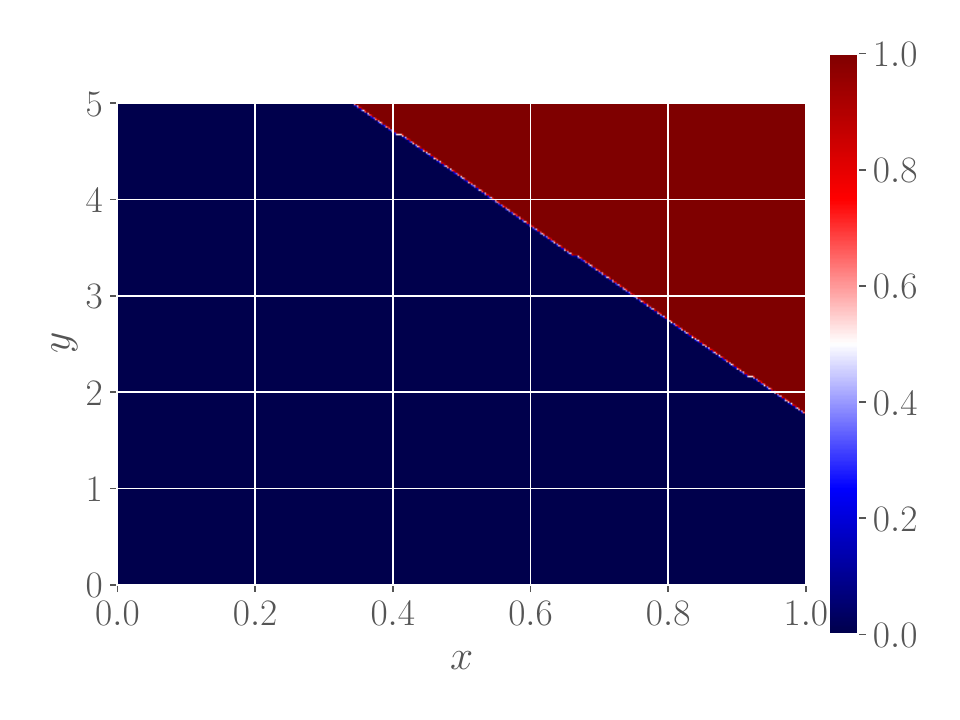}
    \caption{\textbf{Intercept:} Examples of B, PWS1-B and PWS2-B policies (left to right) computed by CGPO, and visualization of PWS-B policy (bottom).}
    \label{fig:intercept-policies}
\end{figure*}

\paragraph{Intercept} Fig. \ref{fig:missile} illustrates the return and worst-case error of Boolean-valued policies for Intercept. Only the policies with at least one case are optimal, with corresponding error of zero. As illustrated in the last plot in Fig. \ref{fig:intercept-policies}, the optimal policies fire whenever a threat is detected in the top right corner of the airspace. 

\paragraph{Analysis of Problem Size} Fig. \ref{fig:size-inventory}, Fig. \ref{fig:size-reservoir} and Fig. \ref{fig:size-vtol} summarize the total number of variables and constraints in the outer MIP formulations solved by CGPO at each iteration for Inventory, Reservoir and VTOL, respectively. Each iteration of constraint generation requires a roll-out from the dynamics and reward model, which in turn requires instantiate a new set of variables to hold the intermediate state and reward computations, and thus the number of variables and constraints grows linearly with the iterations. Even at the last iteration of CGPO, we see that the number of variables and constraints remains manageable.

\begin{figure*}
    \centering
    \includegraphics[width=0.45\linewidth]{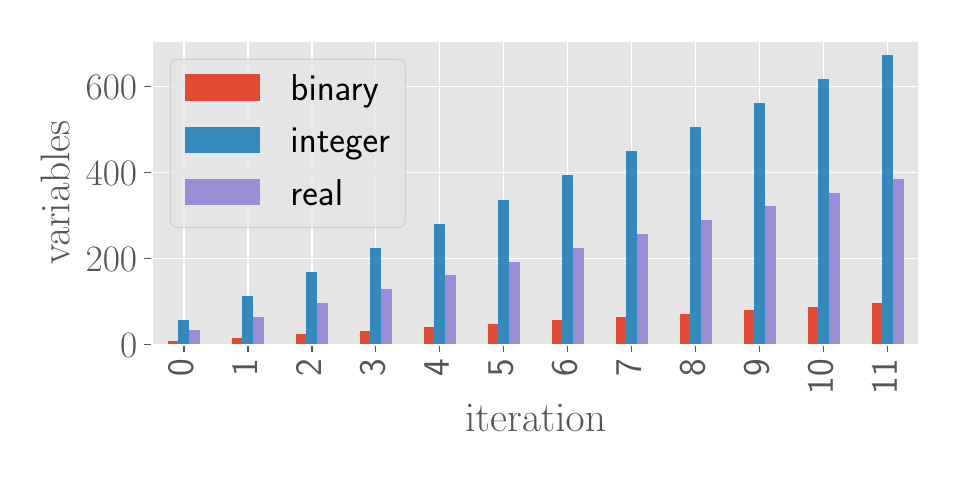}%
    \includegraphics[width=0.45\linewidth]{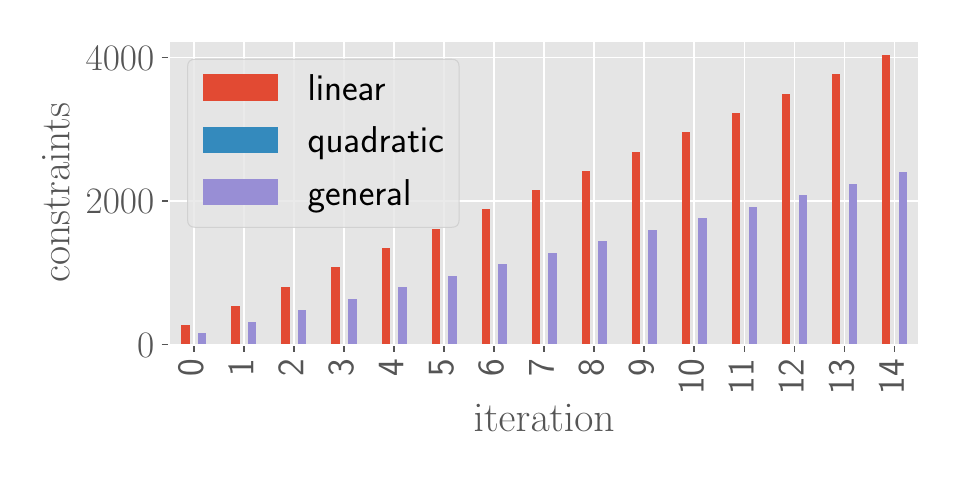} \\
    \includegraphics[width=0.45\linewidth]{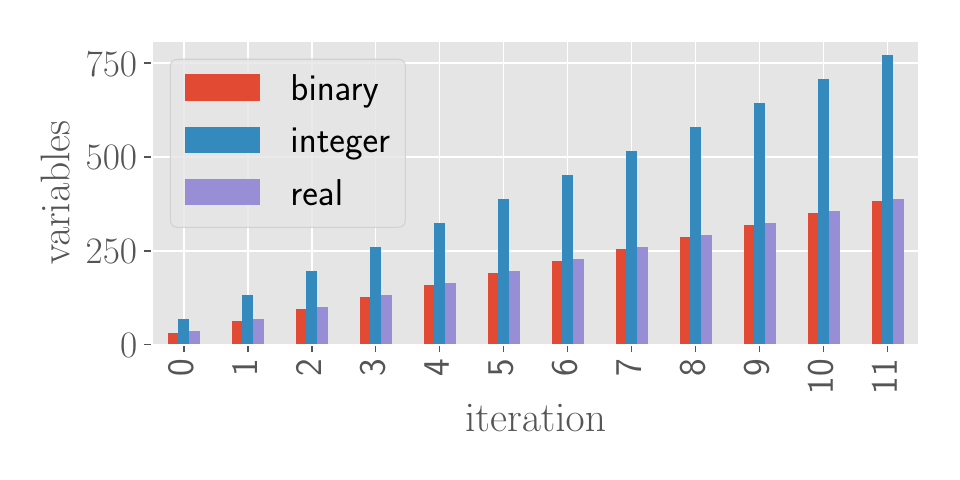}%
    \includegraphics[width=0.45\linewidth]{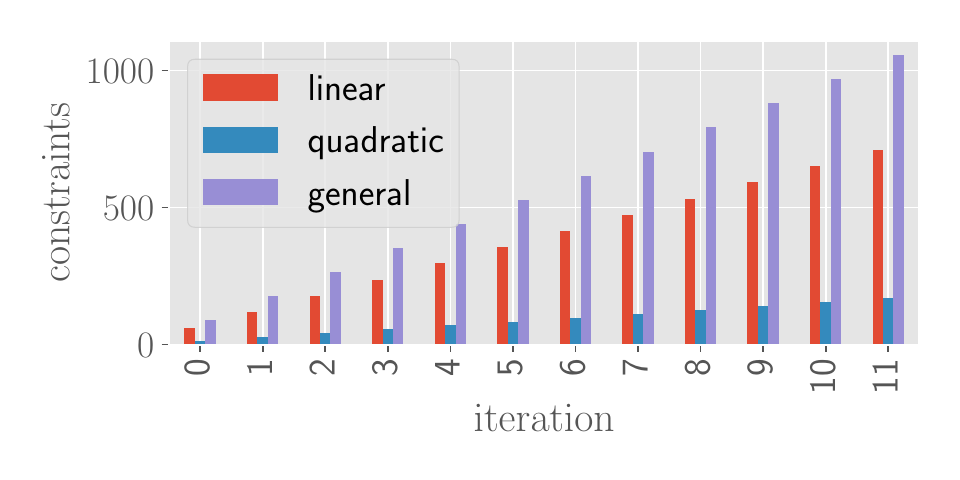}
    \caption{\textbf{Inventory:} Number of decision variables (left) and constraints (right) for C policy (top) and PWS1-S policy (bottom) corresponding to the outer optimization problem at each iteration of CGPO.}
    \label{fig:size-inventory}
\end{figure*}

\begin{figure*}
    \centering
    \includegraphics[width=0.45\linewidth]{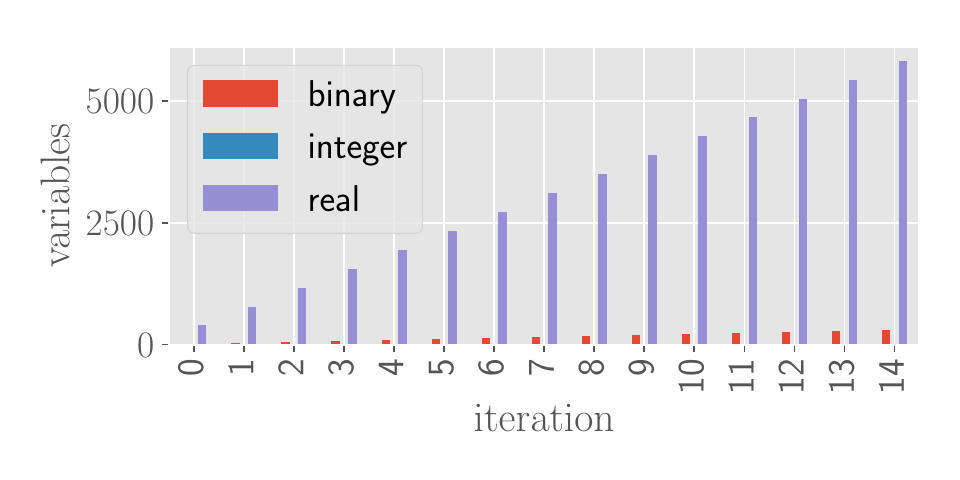}%
    \includegraphics[width=0.45\linewidth]{figures/scalability/Reservoirlinear1100995Couterconstraints.pdf} \\
    \includegraphics[width=0.45\linewidth]{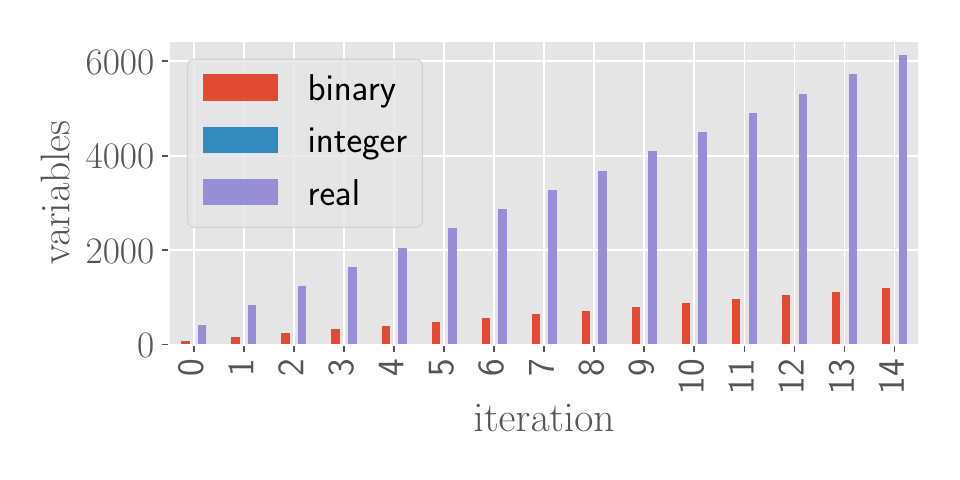}%
    \includegraphics[width=0.45\linewidth]{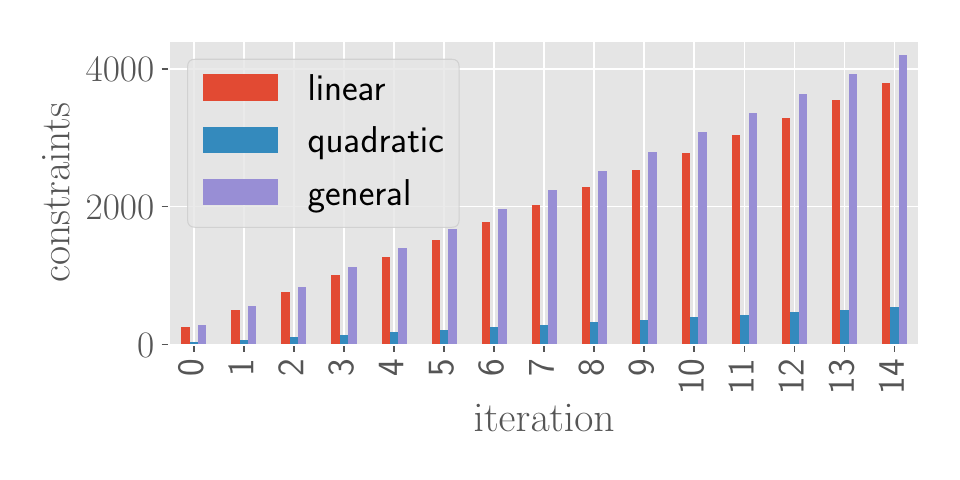}
    \caption{\textbf{Reservoir:} Number of decision variables (left) and constraints (right) for C policy (top) and PWS1-S policy (bottom) corresponding to the outer optimization problem at each iteration of CGPO.}
    \label{fig:size-reservoir}
\end{figure*}

\begin{figure*}
    \centering
    \includegraphics[width=0.45\linewidth]{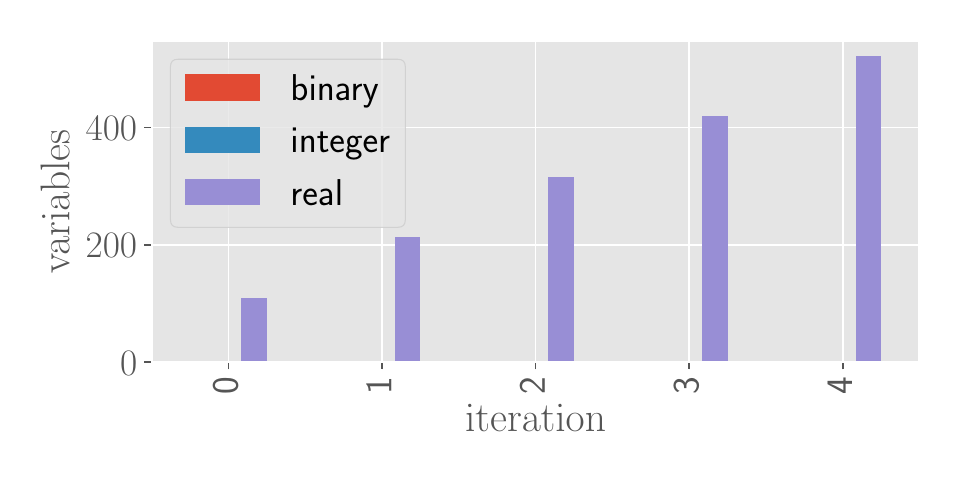}%
    \includegraphics[width=0.45\linewidth]{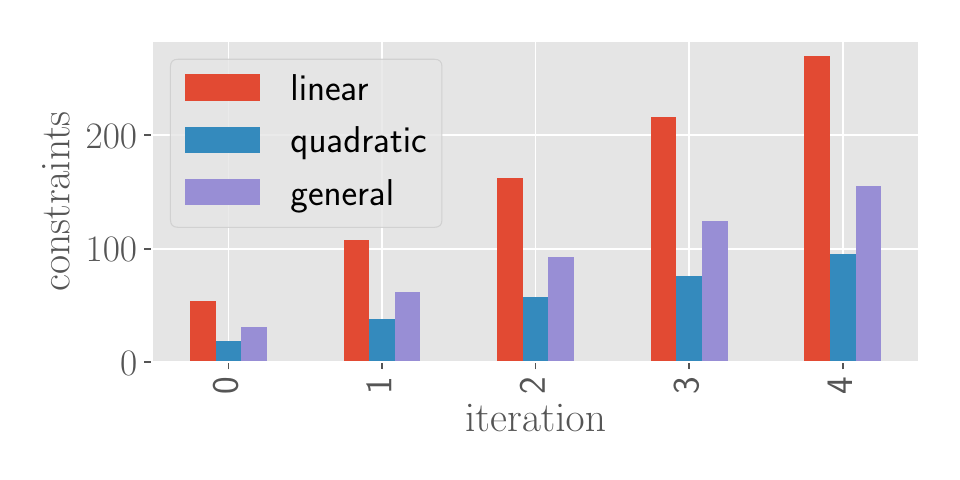}
    \caption{\textbf{VTOL:} Number of decision variables (left) and constraints (right) for Q policy corresponding to the outer optimization problem at each iteration of CGPO.}
    \label{fig:size-vtol}
\end{figure*}

\subsection{Additional Related Work}
\label{sec:app-related-work}

The scheme of mixed discrete-continuous models can be traced back to the hybrid automata \citep{henzinger1996theory}, and how to verify reachability and model checking \citep{henzinger1998algorithmic}. 
In \citep{ferretti2014recent} a policy iteration approach has been taken to synthesize a feedback controller to a continuous-time system with discrete jumps. \emph{Model predictive control} (MPC) approaches are especially appealing in the hybrid setting for controller synthesis 
\citep{borrelli2017predictive}, however they are not employed for worst-case analysis and policy optimization as in this work.

\paragraph{(Chance-) Constrained Optimization in Hybrid Systems}
The tool of constrained optimization is popular in policy optimization for hybrid systems \citep{borrelli2003constrained}. For instance, \citep{SATO2021217} posed the problem of conjunctive synthesis and system falsification as constrained optimization. In the probabilistic or uncertain case, chance-constraints are popular for finding a robust policy; 
\citep{5477242} used predictive control to synthesis the optimal controller in expectation under chance constraints.
\citep{petsagkourakis2022chance} utilized an recurrent neural network as parameterized policy for policy optimization under chance-constraints. Our work differs from the above as we optimize a given policy structure, under the worst case, thus providing guarantees on the bounded policy optimality for the chosen policy class (also providing interpretability of the final optimized policy, due to the compactness of the policy class).

\paragraph{Constrained Policy Optimization in Reinforcement Learning} A different stream of work incorporates safety constraints on parameterized policies \citep{achiam2017constrained,liu2021policy,polosky2022constrained}. However, they typically only guarantee convergence through gradient-based methods, and not bounded policy optimality as in our work. Furthermore, to accomplish this, they require differentiable -- and often non-compact policy classes (i.e. such as neural network) -- which makes them unsuitable for directly optimizing piecewise policy classes or other compact policy classes with discrete structure as studied in this work.

\bibliographystyle{unsrtnat}
\bibliography{references}

\end{document}